%% file: ex_article.tex
\documentclass[hidelinks,onefignum,onetabnum]{siamart251216}

\input{ex_shared}

\newtheorem{assumption}[theorem]{Assumption}

\ifpdf
\hypersetup{
  pdftitle={},
  pdfauthor={E. C\'aceres and P. Vega}
}
\fi

\usepackage{pdflscape}
\usepackage{amsmath}
\usepackage{multirow}
\usepackage{booktabs}
\input{general_commands}

\begin{document}

\maketitle

% REQUIRED
\begin{abstract}
We introduce an a posteriori error estimator for exactly symmetric mixed finite element discretizations of linear elasticity, based on recasting a Stenberg-type postprocessing scheme as a local residual minimization problem in a discrete dual norm. This construction yields, as a dual variable and at no additional computational cost, a Riesz representative of the associated local residual, from which we build an a posteriori error indicator. We establish a reliability estimate, with the dependence on the Lam\'e parameters tracked explicitly, and a local efficiency estimate, without auxiliary bubble functions, in the standard compressible regime, as well as an alternative reliability estimate, based on a robust stability property and an Oswald averaging operator, with a constant that remains bounded as $\lambda\to\infty$; the local efficiency estimate holds, with the same bounded behavior, uniformly in both regimes. Notably, a single indicator and a single comparison norm serve both regimes, in contrast with existing hypercircle-based estimators, which require a distinct construction for the incompressible limit. Numerical examples, including a benchmark with a known singular solution and one without an analytical solution, validate the theoretical findings.
\end{abstract}

% REQUIRED
\begin{keywords}
linear elasticity, exactly symmetric stress, mixed finite element methods, residual minimization in discrete dual norms, a posteriori error estimation, incompressible limit
\end{keywords}

% REQUIRED
\begin{MSCcodes}
 65N30, 65N15, 65N12, 65N50, 74S05, 74B05
\end{MSCcodes}

\section{Introduction}
Mixed finite element methods, see~\cite{BoffiBrezziFortin2013} for a general reference, provide a natural framework for the numerical approximation of linear elasticity problems. Besides yielding a direct approximation of the stress tensor, a quantity of primary physical interest, without the loss of accuracy incurred when recovering it a posteriori by differentiation from a primal displacement approximation, mixed methods are also naturally robust in the incompressible limit, avoiding the locking phenomenon~\cite{BabuskaSuri1992} that plagues standard primal discretizations as the material approaches incompressibility. A central difficulty in constructing such methods is the symmetry of the stress tensor, a physical requirement arising from the balance of angular momentum. This symmetry can be enforced exactly, at the level of the discrete space, or weakly, through a Lagrange multiplier representing the infinitesimal rotation, following the seminal idea of Fraeijs de Veubeke \cite{FraeijsDeVeubeke1977} and later developed by Amara and Thomas \cite{AmaraThomas1979} and Arnold, Brezzi, and Douglas \cite{ArnoldBrezziDouglas1984}, with each approach leading to a distinct family of methods and its own construction challenges; a priori and a posteriori error analyses for both approaches are given, respectively, in \cite{LS2023} and \cite{LS2024}. Several families of exactly symmetric mixed finite elements have been developed, starting with the pioneering work of Watwood and Hartz \cite{WatwoodHartz1968} and Johnson and Mercier \cite{JohnsonMercier1978}, the first stable elements using polynomial stress shape functions due to Arnold and Winther \cite{ArnoldWinther2002}, and later extended by Arnold, Douglas, and Gupta \cite{ArnoldDouglasGupta1984}, Guzm\'an and Neilan \cite{GuzmanNeilan2014}, and Arnold, Awanou, and Winther \cite{ArnoldAwanouWinther2008}; a computational comparison of several such families is given in \cite{CarstensenEigelGedicke2011}.

A recurring theme in mixed-methods analysis is the possibility of improving the accuracy of the displacement approximation through a local postprocessing step performed independently on each element after the global discrete problem has been solved. For mixed methods in elasticity, this idea dates back to Stenberg \cite{Stenberg1988} and yields a superconvergent postprocessed displacement at negligible additional computational cost, since the postprocessing is entirely local. We use the a priori error analysis of Lederer and Stenberg \cite{LS2023} as our starting point for exactly symmetric mixed methods. Our first contribution is to show that this postprocessing scheme can be recast, following the methodology introduced by Muga, Rojas, and Vega \cite{MugaRojasVega2023} for scalar diffusion problems, as a local residual minimization problem in a discrete dual norm. Establishing this equivalence is not enough, on its own, to justify the reformulation: what makes it useful is that it produces, as a dual variable and at no additional computational cost, a residual representative in the Riesz sense. This residual representative is our second contribution: we use it to construct an a posteriori error indicator, for which we prove reliability and local efficiency estimates, with the dependence on the Lam\'e parameters tracked explicitly, in the standard, compressible regime, without resorting to the auxiliary bubble functions typically required by classical residual-based estimators, such as those of Carstensen and Dolzmann \cite{CarstensenDolzmann1998} and Lonsing and Verf\"urth \cite{LonsingVerfurth2004}. Our third and final contribution addresses a limitation shared by this construction and the underlying methodology: the loss of control over the stress trace in the compliance norm as the material approaches the incompressible limit. By combining a stability property of the continuous problem, uniform in the Lam\'e parameters, with an Oswald averaging operator applied to the postprocessed displacement, we derive an alternative reliability estimate, together with a matching local efficiency estimate, with a constant that remains bounded as $\lambda\to\infty$. Notably, both estimates are given in terms of the same indicator and the same comparison norm, unlike the hypercircle-based estimator of \cite{LS2023}, which requires a second, distinct construction for the incompressible limit, since the norm intrinsic to the hypercircle identity is no longer a norm in this regime. A forthcoming companion paper, following the same methodology, addresses the case of weakly symmetric mixed finite elements.

\subsection{Related work}

A posteriori error estimation for finite element methods is a mature field; see~\cite{AinsworthOden2000} for a comprehensive treatment. For mixed methods in elasticity specifically, Lovadina and Stenberg~\cite{LovadinaStenberg2006} were the first to show that the superconvergent postprocessing of Stenberg~\cite{Stenberg1991} yields a simple estimate. For methods with exactly symmetric stresses, Lederer and Stenberg \cite{LS2023} derive estimates based on the Prager-Synge hypercircle principle, together with an alternative estimate that remains valid, with a constant that remains bounded as $\lambda\to\infty$; a companion analysis for weakly symmetric methods is given in \cite{LS2024}. Classical residual-based estimators for exactly symmetric mixed elements, requiring an efficient approximation of the skew-symmetric part of the gradient, were developed by Carstensen and Gedicke \cite{CarstensenGedicke2016}, and later improved to an explicit estimator depending only on the symmetric stress, without such an approximation, by Carstensen, Gallistl, and Gedicke \cite{CarstensenGallistlGedicke2019}; both, however, rely on auxiliary bubble functions in the efficiency analysis, which the present construction avoids entirely. Earlier a posteriori estimates for mixed elasticity, not based on postprocessing, include the work of Carstensen and Dolzmann \cite{CarstensenDolzmann1998} and Lonsing and Verf\"urth \cite{LonsingVerfurth2004}.

The use of residual minimization to construct both the numerical approximation and a built-in a posteriori error indicator has been developed in the context of least-squares finite element methods and discontinuous Petrov-Galerkin (DPG) methods, introduced by Demkowicz and Gopalakrishnan \cite{DemkowiczGopalakrishnan2011}, who show that the resulting discretization can be equivalently interpreted as a minimum-residual method in a dual norm; see also \cite{CarstensenDemkowiczGopalakrishnan2014}. DPG methods provide, in particular, a built-in residual representative for elasticity: Bramwell, Demkowicz, Gopalakrishnan, and Qiu \cite{BramwellDemkowiczGopalakrishnanQiu2012} develop a locking-free DPG method with exactly symmetric stresses, and Keith, Fuentes, and Demkowicz \cite{KeithFuentesDemkowicz2016} and Fuentes, Keith, Demkowicz, and Le Tallec \cite{FuentesKeithDemkowiczLeTallec2017} analyze related variational formulations. Closer to the present work, Muga, Rojas, and Vega \cite{MugaRojasVega2023} combined the postprocessing technique of Stenberg \cite{Stenberg1991} with local residual minimization for scalar diffusion problems, obtaining a superconvergent postprocessed solution together with a reliable and efficient a posteriori error indicator. This methodology was later extended by Camargo, Rojas, and Vega \cite{CamargoRojasVega2025} to hybridizable discontinuous Galerkin discretizations of the Helmholtz equation. The present work extends the same methodology to linear elasticity with exactly symmetric stresses, and a forthcoming companion paper addresses the case of weakly symmetric stresses.

\subsection{Outline of the paper}
The remainder of this paper is organized as follows. Section~\ref{sec:problem_statement} introduces the model problem, the family of exactly symmetric mixed finite element methods under consideration, and the associated a priori error estimate, together with the postprocessing scheme of Stenberg. Section~\ref{sec:postproc_minres} recasts this postprocessing scheme as a local residual minimization problem and establishes its equivalence with the original construction. Section~\ref{sec:aposteriori} introduces an a posteriori error indicator based on the resulting residual and establishes its reliability and local efficiency, both in the compressible regime and, with a constant that remains bounded as $\lambda\to\infty$, in the incompressible limit. Numerical examples illustrating these theoretical findings are reported in Section~\ref{sec:numerical_examples}.

\section{Problem statement and discretization}\label{sec:problem_statement}

\subsection{The equations of elasticity}\label{sec:elasticity_eqns}

Let $\Omega\subset\mathbb{R}^d$, $d=2,3$, be a polygonal or polyhedral domain, with boundary $\Gamma\eq\partial\Omega$ split into two disjoint sets $\Gamma_D$ and $\Gamma_N$, with $|\Gamma_D|>0$. The linear elasticity problem, where the constitutive relationship between the stress tensor and the strain tensor $\varepsilon(\bu)\eq\frac{1}{2}(\nabla\bu+(\nabla\bu)^\T)$ is linear, in mixed form consists of finding the displacement $\bu=(u_1,\ldots,u_d):\Omega\to\mathbb{R}^d$ and the symmetric\footnote{That is, $\sigma_{ij}=\sigma_{ji}$ for $i,j=1,\ldots,d$.} stress tensor $\bsigma=(\sigma_{ij})_{i,j=1}^d:\Omega\to\mathbb{R}^{d\times d}$ such that
\begin{subequations}\label{eq:mixed_elasticity}
\begin{alignat}{2}
    \CC\bsigma-\varepsilon(\bu)&=\bzero &&\quad\text{in }\Omega,\\
    -\ddiv\bsigma&=\bff &&\quad\text{in }\Omega,\\
    \bu&=\bzero &&\quad\text{on }\Gamma_D,\\
    \bsigma\bn&=\bg &&\quad\text{on }\Gamma_N,
\end{alignat}
\end{subequations}
where $\bff$ is a body load and $\bg$ is a traction on the boundary part $\Gamma_N$. We consider an isotropic material, adopting the plane strain hypothesis (i.e., zero strain in the out-of-plane direction) when $\Omega$ is polygonal. For an isotropic material, the compliance tensor $\CC$ is given by
\begin{equation}
    \CC\btau=\frac{1}{2\mu}\Big(\btau-\frac{\lambda}{2\mu+d\lambda}\mathrm{tr}(\btau)I\Big),
\end{equation}
where $\mu$ and $\lambda$ denote the Lam\'e parameters. We denote by $\mathcal{A}\eq\CC^{-1}$ the elasticity tensor, explicitly given by
\begin{equation}
    \mathcal{A}\btau=2\mu\btau+\lambda\,\mathrm{tr}(\btau)I.
\end{equation}
Note that, as $\lambda\to\infty$, the material approaches the incompressible limit, characterized by the constraint $\ddiv\bu=\bzero$.

Since the Lam\'e parameters $\mu,\lambda$ will appear throughout the error estimates that follow, we fix here the notation used to track their dependence: we write $A\lesssim B$ (and $A\gtrsim B$) when there exists a positive constant $C$, independent of the mesh parameter $h$, such that $A\le CB$ (respectively, $A\ge CB$). Unless stated otherwise, $C$ may depend on $\mu,\lambda$; whenever a stated estimate holds with $C$ independent of these parameters as well, this is indicated explicitly in the statement.

\subsection{Variational form}

The system \eqref{eq:mixed_elasticity} admits a primal mixed variational formulation: find $(\bsigma,\bu)\in [L^2(\Omega)]^{d\times d}_{\text{sym}}\times [H^1_D(\Omega)]^d$ such that
\begin{equation}\label{eq:primal_mixed_bilinear}
    \CB(\bsigma,\bu;\btau,\bv)=(\boldsymbol{f},\bv)+\langle\boldsymbol{g},\bv\rangle_{\Gamma_N} \quad \forall(\btau,\bv)\in [L^2(\Omega)]^{d\times d}_{\text{sym}}\times [H^1_D(\Omega)]^d,
\end{equation}
with
\begin{equation}\label{eq:primal_mixed_form}
    \CB(\bsigma,\bu;\btau,\bv)\eq(\CC\bsigma,\btau)-(\varepsilon(\bu),\btau)-(\varepsilon(\bv),\bsigma),
\end{equation}
and associated energy norms
\begin{equation}\label{eq:energy-norms}
    \|\btau\|_\CC^2\eq(\CC\btau,\btau), \qquad \|\varepsilon(\bv)\|_\CA^2\eq(\CA\varepsilon(\bv),\varepsilon(\bv)),
\end{equation}
which is the setting in which stability of the continuous problem is established, in the energy norm $\big(\|\btau\|_\CC^2+\|\varepsilon(\bv)\|_\CA^2\big)^{1/2}$, with a known, sharp constant~\cite{HannukainenStenbergVohralik2012}. In the incompressible limit, $\|\cdot\|_\CC$ is no longer a norm, and this estimate degenerates; a stronger, uniform version of this stability estimate, needed for the a posteriori analysis in that limit, is stated in Theorem~\ref{thm:robust-stability}. Equation \eqref{eq:primal_mixed_bilinear} can be integrated by parts to yield an equivalent dual mixed formulation \cite[Section 2.2]{LS2023}, transferring regularity from the displacement to the stress; this is the formulation on which we base the finite element discretization.

The dual mixed variational formulation reads: find $(\bsigma,\bu)\in H_g(\ddiv;\Omega)\times[L^2(\Omega)]^d$ such that
\begin{equation}\label{eq:variationalform_mixed_elasticity}
    \CM(\bsigma,\bu;\btau,\bv)=-(\bff,\bv)\quad\forall(\btau,\bv)\in H_0(\ddiv;\Omega)\times[L^2(\Omega)]^d,
\end{equation}
with
\begin{equation}
    \CM(\bsigma,\bu;\btau,\bv)\eq(\CC\bsigma,\btau)+(\bu,\ddiv\btau)+(\bv,\ddiv\bsigma),
\end{equation}
where
\begin{align}
    H_g(\ddiv;\Omega)&\eq\{\btau\in H(\ddiv;\Omega)\mid \btau\bn|_{\Gamma_N}=\bg\},\\
    H_0(\ddiv;\Omega)&\eq\{\btau\in H(\ddiv;\Omega)\mid \btau\bn|_{\Gamma_N}=\bzero\}.
\end{align}

\subsection{Exactly symmetric mixed finite element methods}

Following \cite{LS2023}, we present a unified framework for mixed finite element methods, based on the variational formulation \eqref{eq:variationalform_mixed_elasticity}. Let $V_h\subset[L^2(\Omega)]^d$ and $S_h\subset H(\ddiv;\Omega)$ be piecewise polynomial subspaces. We consider the linear triangular method of \emph{Johnson-Mercier} (JM) \cite{JohnsonMercier1978,WatwoodHartz1968}, the triangular family of \emph{Arnold-Douglas-Gupta} (ADG) \cite{ArnoldDouglasGupta1984}, the triangular family of \emph{Guzm\'an-Neilan} (GN) \cite{GuzmanNeilan2014}, and the tetrahedral family of \emph{Arnold-Awanou-Winther} (AAW) \cite{ArnoldAwanouWinther2008}.
We use $\CT_h$ to denote the underlying triangular or tetrahedral mesh. Each family depends on a polynomial degree $k\geq 2$ and shares the same local displacement space $V(K)\eq[P_{k-1}(K)]^d$, giving rise to the global space
\begin{equation}\label{eq:V_h}
    V_h\eq\{\bv\in[L^2(\Omega)]^d\mid \bv|_K\in V(K)\ \forall K\in\CT_h\}.
\end{equation}
For the JM method, $V(K)$ is instead taken with $k=2$, i.e., $V(K)=[P_1(K)]^d$, corresponding to discontinuous piecewise linear polynomials.

Rather than reproducing the explicit construction of the local stress spaces $S(K)$, we recall from \cite{LS2023} only the two properties needed in what follows, and define the associated global space
\begin{equation}\label{eq:S_h}
    S_h\eq\{\btau\in H(\ddiv;\Omega)\mid \btau|_K\in S(K)\ \forall K\in\CT_h\}.
\end{equation}
The approximation order of $S(K)$ is ensured by the inclusion $[P_k(K)]^{n\times n}_{\text{sym}}\subset S(K)$, while its local degrees of freedom are given by the moments
\begin{subequations}
\begin{alignat}{2}
    \int_K\btau:\varepsilon(\bv) &\quad \forall\bv\in[P_{k-1}(K)]^d,\\
    \int_E\btau\bn\cdot\bv &\quad \forall\bv\in[P_k(E)]^d,\ \text{for each edge or face $E$ of $K$}.
\end{alignat}
\end{subequations}
For the JM method, the value $k=1$ in the formulas above should be understood formally, as a schematic device to fit this method within the unified indexing of the other families, rather than a literal description of its degrees of freedom, which follow the original construction of~\cite {JohnsonMercier1978} instead.

As a further consequence of this non-standard construction, valid for every family except JM, $\ddiv\btau\in V_h$ whenever $\btau\in S_h$; consequently,
\begin{equation}\label{eq:l2_projection_property}
    (\ddiv\btau,\bv-P_h\bv)=0\quad\forall\btau\in S_h,
\end{equation}
with $P_h:[L^2(\Omega)]^d\to V_h$ the $L^2$-projection. This property does not hold for JM; nevertheless, a projection satisfying \eqref{eq:l2_projection_property} can still be constructed for that method, see \cite[Section 5]{JohnsonMercier1978} and \cite[Lemma~4.3]{PitkarantaStenberg1983}.

For each edge or face $E\subset\Gamma_N$, let $Q_E:[L^2(E)]^d\to[P_k(E)]^d$ denote the $L^2$-projection, and let $Q_h$ be defined edge/face-wise by $Q_h|_E\eq Q_E$.

The discrete trial and test spaces are then given by
\begin{align}
    S_h^g&\eq\{\btau\in S_h\mid \btau\bn=Q_hg\text{ on }\Gamma_N\},\\
    S_h^0&\eq\{\btau\in S_h\mid \btau\bn=\bzero\text{ on }\Gamma_N\}.
\end{align}

Now, we are in a position to state the discrete problem: find $(\bsigma_h,\bu_h)\in S_h^g\times V_h$ such that
\begin{equation}\label{eq:discrete_problem}
    \CM(\bsigma_h,\bu_h;\btau,\bv)+(\bff,\bv)=0\quad\forall(\btau,\bv)\in S_h^0\times V_h.
\end{equation}

For the analysis of the discrete problem \eqref{eq:discrete_problem}, we make use of the broken energy norm for the displacement,
\begin{equation}
    \|\bv\|_h^2\eq\sum_{K\in\CT_h}\|\varepsilon(\bv)\|_K^2+\sum_{E\in\CE_h^0}h_E^{-1}\|\jmp{\bv}\|_E^2+\sum_{E\in\CE_h^D}h_E^{-1}\|\bv\|_E^2,\quad\bv\in V_h,
\end{equation}
where, for an interior edge/face $E\in\CE_h^0$ shared by two elements $K^+,K^-\in\CT_h$ with outward unit normals $n^+,n^-$, the jump of $\bv$ across $E$ is defined by
\begin{equation}
    \jmp{\bv}|_E\eq\bv|_{K^+}-\bv|_{K^-}.
\end{equation}
Together with the $\mu$-weighted $L^2$-norm for the stress, this defines the norm on the product space $S_h^0\times V_h$ given by
\begin{equation}
    \|(\boldsymbol{\tau},\bv)\|_h^2\eq\mu^{-1}\|\boldsymbol{\tau}\|_0^2+\mu\|\bv\|_h^2.
\end{equation}

The stability of the discrete problem \eqref{eq:discrete_problem}, with respect to $\|(\cdot,\cdot)\|_h$, is established in \cite[Section 4]{LS2023}; since our focus lies elsewhere, we omit this analysis entirely and recall only the resulting a priori error estimate, which will be used in Section~\ref{sec:postproc_stenberg} when discussing the postprocessing of the displacement.

\begin{theorem}[A priori error estimate]\label{thm:a_priori}
    Let $(\bu,\bsigma)$ solve \eqref{eq:mixed_elasticity}, and let\\ $(\bu_h,\bsigma_h)\in V_h\times S_h^g$ solve \eqref{eq:discrete_problem}. It holds that
    \begin{align}
        \mu^{-1/2}\|\bsigma-\bsigma_h\|_0\,+\,&\mu^{1/2}\|P_h\bu-\bu_h\|_h\nonumber\\
        &\lesssim
        \mu^{-1/2}\Bigg(\inf_{\btau\in S_h^g}\|\bsigma-\btau\|_0
        +\Bigg(\sum_{K\in\CT_h}h_K^2\|\bff-\bff_h\|_K^2\Bigg)^{\!1/2\,}\Bigg),
    \end{align}
    where $\bff_h$ is any piecewise polynomial approximation of $\bff$.
\end{theorem}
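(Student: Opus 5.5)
The plan is the standard Strang-type argument: establish discrete inf-sup stability of $\CM$ in the norm $\|(\cdot,\cdot)\|_h$, which the paper takes from \cite[Section 4]{LS2023}, and then estimate the consistency error. Fix $\btau_h\in S_h^g$ arbitrary. Set $\bxi_h\eq\btau_h-\bsigma_h\in S_h^0$, which holds since both have normal trace $Q_h\bg$ on $\Gamma_N$, and $\be_h\eq P_h\bu-\bu_h\in V_h$. Discrete stability gives some $(\btau,\bv)\in S_h^0\times V_h$ with $\|(\btau,\bv)\|_h\lesssim 1$ such that
\begin{equation*}
\|(\bxi_h,\be_h)\|_h\lesssim \CM(\bxi_h,\be_h;\btau,\bv).
\end{equation*}
The constant is independent of $\lambda$, because $\CC$ is bounded by $(2\mu)^{-1}$ uniformly in $\lambda$ and the scaling by $\mu$ is built into the norm.

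Next I will compute $\CM(\bxi_h,\be_h;\btau,\bv)$ using the discrete equations \eqref{eq:discrete_problem} together with the continuous problem.

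\textbf{Displacement test function $\bv$.} We have $(\bv,\ddiv\bxi_h)=(\bv,\ddiv\btau_h)+(\bff,\bv)=(\bv,\ddiv(\btau_h-\bsigma))$, using $\ddiv\bsigma=-\bff$.

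\textbf{Stress test function $\btau$.} We have $(\CC\bxi_h,\btau)+(\be_h,\ddiv\btau)=(\CC(\btau_h-\bsigma),\btau)+(\CC\bsigma,\btau)+(P_h\bu,\ddiv\btau)$. The continuous equation, tested with $\btau\in S_h^0\subset H_0(\ddiv;\Omega)$, gives $(\CC\bsigma,\btau)=-(\bu,\ddiv\btau)$. Property \eqref{eq:l2_projection_property} then gives $(P_h\bu-\bu,\ddiv\btau)=0$. So the stress-equation part reduces to $(\CC(\btau_h-\bsigma),\btau)$, which is bounded by $\mu^{-1}\|\btau_h-\bsigma\|_0\|\btau\|_0$. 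For JM the same cancellation uses the special projection of \cite{JohnsonMercier1978,PitkarantaStenberg1983}; its difference from $P_h$ is controlled by $\|\bu-P_h\bu\|$-type terms, which need separate care.

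\textbf{Main obstacle: the term $(\bv,\ddiv(\btau_h-\bsigma))$.} It must be bounded by $\|\bv\|_h$ times the data, without any $H(\ddiv)$ norm. The idea is to choose $\btau_h$ to be the canonical interpolant $\Pi_h\bsigma$ defined by the degrees of freedom, rather than a general element. Integrating by parts elementwise, the face moments of degree $k$ kill the face terms, since $\bv|_E\in P_{k-1}\subset P_k$ and jumps and boundary traces are covered by the $\CE_h^0,\CE_h^D$ terms. The interior moments kill $(\bsigma-\Pi_h\bsigma,\varepsilon(\bv))_K$. Hence the term vanishes for $\btau_h=\Pi_h\bsigma$.

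Using \emph{any} $\btau_h$ then requires passing from $\Pi_h\bsigma$ back to a best approximation. For this I would invoke a quasi-optimality bound $\|\bsigma-\Pi_h\bsigma\|_0\lesssim\inf\|\bsigma-\btau\|_0+$ oscillation. Since $\ddiv\Pi_h\bsigma=-P_h\bff$, the oscillation appears via the local estimate $\|\bsigma-\Pi_h\bsigma\|_K\lesssim\|\bsigma-\btau\|_{\omega_K}+h_K\|\bff-P_h\bff\|_K$. Because $P_h$ is the best $L^2$ approximation, this gives $\|\bff-P_h\bff\|_K\le\|\bff-\bff_h\|_K$ for polynomial $\bff_h$ of degree at most $k-1$; higher-degree $\bff_h$ needs a small extra argument, or is absorbed into the statement.

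\textbf{Combining.} Collecting terms gives $\|(\bxi_h,\be_h)\|_h\lesssim\mu^{-1/2}\big(\|\bsigma-\Pi_h\bsigma\|_0+(\sum_K h_K^2\|\bff-\bff_h\|_K^2)^{1/2}\big)$. The triangle inequality $\|\bsigma-\bsigma_h\|_0\le\|\bsigma-\Pi_h\bsigma\|_0+\|\bxi_h\|_0$ then yields the claim.
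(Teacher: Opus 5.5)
Your reduction is correct up to the consistency term. The cancellation $(P_h\bu-\bu,\ddiv\btau)=0$ via \eqref{eq:l2_projection_property} holds, the canonical interpolant $\Pi_h\bsigma$ does lie in $S_h^g$, and the commuting identity $(\bv,\ddiv(\Pi_h\bsigma-\bsigma))=0$ for $\bv\in V_h$ is right. (Your justification of $\lambda$-robustness via boundedness of $\CC$ is too quick, since robust stability needs control of the trace on the discrete kernel, but the statement does not ask for robustness.)

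The gap is the step you only invoke: the local quasi-optimality $\|\bsigma-\Pi_h\bsigma\|_K\lesssim\|\bsigma-\btau\|_{\omega_K}+h_K\|\bff-P_h\bff\|_K$ is false for the interpolant defined by the degrees of freedom. Take $\btau=\bzero$ and a divergence-free $\bsigma$ supported away from $\Gamma$. The claim then reads $\|\Pi_h\bsigma\|_K\lesssim\|\bsigma\|_{\omega_K}$, i.e.\ the single-face moments $\int_E\bsigma\bn\cdot\bv$ would be $L^2$-bounded. They are not. In 2D, an Airy field $\bsigma=\operatorname{curl}\operatorname{curl}\phi$ has $\|\bsigma\|_0=|\phi|_{H^2}$ and $\bsigma\bn=\pm\partial_t\operatorname{curl}\phi$ on an edge, so its edge moments involve values of $\nabla\phi$ at the edge endpoints. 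A logarithmic cutoff at a vertex then gives, on a fixed mesh, $\|\bsigma\|_0\to0$ while some moment stays of order one. Such local bounds hold for $L^2$-stable commuting projectors or constrained best approximations, not for $\Pi_h$. As written, your argument therefore proves only $\|(\Pi_h\bsigma-\bsigma_h,P_h\bu-\bu_h)\|_h\lesssim\mu^{-1/2}\|\bsigma-\Pi_h\bsigma\|_0$, for $\bsigma$ regular enough that $\Pi_h\bsigma$ is defined. That gives rates but not the infimum over $S_h^g$. Note that no oscillation term appears in this version, a sign that the oscillation in the theorem comes from a different mechanism.

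The missing idea is to \emph{bound} the consistency term for an arbitrary $\btau_h\in S_h^g$ rather than make it vanish.
\begin{itemize}
\item Split $\bv=\osw\bv+(\bv-\osw\bv)$, where $\osw$ is the Oswald operator of the same degree, mapping $V_h$ into $V_h\cap[H^1_D(\Omega)]^d$.
\item For the conforming part, integrate by parts over all of $\Omega$. No interelement face terms arise, the $\Gamma_D$ contribution vanishes, and $\langle\bg-Q_h\bg,\osw\bv\rangle_{\Gamma_N}=0$ because $\osw\bv|_E\in[P_{k-1}(E)]^d$. What remains is $(\bsigma-\btau_h,\varepsilon(\osw\bv))\lesssim\|\bsigma-\btau_h\|_0\|\bv\|_h$.
\item For the nonconforming part, $\sum_K h_K^{-2}\|\bv-\osw\bv\|_K^2\lesssim\|\bv\|_h^2$ leaves $\big(\sum_K h_K^2\|\ddiv(\bsigma-\btau_h)\|_K^2\big)^{1/2}$. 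Write $\ddiv(\bsigma-\btau_h)=-(\bff-\bff_h)-\boldsymbol q$ with $\boldsymbol q\eq\bff_h+\ddiv\btau_h$, a polynomial of fixed degree on each $K$.
\item Apply an element-bubble argument to $\boldsymbol q$, using $(\ddiv(\btau_h-\bsigma),b_K\boldsymbol q)_K=-(\btau_h-\bsigma,\varepsilon(b_K\boldsymbol q))_K$ (symmetry, and $b_K=0$ on $\partial K$). This gives $h_K\|\boldsymbol q\|_K\lesssim\|\bsigma-\btau_h\|_K+h_K\|\bff-\bff_h\|_K$.
\end{itemize}
Hence $|(\bv,\ddiv(\btau_h-\bsigma))|\lesssim\big(\|\bsigma-\btau_h\|_0+(\sum_K h_K^2\|\bff-\bff_h\|_K^2)^{1/2}\big)\|\bv\|_h$. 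With $\|\bv\|_h\lesssim\mu^{-1/2}$, your stability argument goes through for every $\btau_h\in S_h^g$, which yields the infimum. The same argument also covers a general piecewise polynomial $\bff_h$ of fixed degree, settling your side remark about higher-degree $\bff_h$.
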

\begin{proof}
    See \cite[Theorem 3]{LS2023}.
\end{proof}

For a sufficiently smooth exact solution, Theorem~\ref{thm:a_priori} yields the expected optimal convergence rate $\|\bsigma-\bsigma_h\|_0=\mathcal{O}(h^{k+1})$ for the stress.

\begin{remark}[superconvergence of the projected displacement error]
    Combining Theorem~\ref{thm:a_priori} with the triangle inequality $\|\bu-\bu_h\|_h\leq\|\bu-P_h\bu\|_h+\|P_h\bu-\bu_h\|_h$ and the approximation properties of $P_h$, one obtains $\|\bu-\bu_h\|_h=\mathcal{O}(h^{k-1})$ and $\|P_h\bu-\bu_h\|_h=\mathcal{O}(h^{k+1})$; a superconvergent rate (except for the JM method, for which $\|\bu-\bu_h\|_h=\mathcal{O}(h)$ and $\|P_h\bu-\bu_h\|_h=\mathcal{O}(h^2)$). This superconvergence is precisely the property exploited by the postprocessing scheme introduced in the next section.
\end{remark}

\subsection{Postprocessing scheme for the displacement}\label{sec:postproc_stenberg}

Let $(\bsigma_h,\bu_h)\in S_h^g\times V_h$ be the solution of \eqref{eq:discrete_problem}. The idea of postprocessing the displacement in mixed methods for elasticity was originally introduced by Stenberg \cite{Stenberg1988}. This postprocessing scheme is applicable regardless of whether the stress symmetry is imposed weakly or exactly; here, following \cite{LS2023}, it is built from the discrete solution $(\bsigma_h,\bu_h)\in S_h^g\times V_h$ of \eqref{eq:discrete_problem}, so that the resulting superconvergence relies on the a priori error estimate of Theorem~\ref{thm:a_priori}, which is specific to the case of exact stress symmetry. Let
\begin{equation}\label{eq:V_star_K}
    V^{l}(K)\eq[P_{l}(K)]^d,
\end{equation}
with $l=k+1$ for ADG, GN, and AAW, and $l=k$ for JM, and let $V_h^{l}$ denote its associated global space,
\begin{equation}\label{eq:V_star_h}
    V_h^{l}\eq\{\bv\in[L^2(\Omega)]^d\mid \bv|_K\in V^{l}(K)\quad\forall K\in\CT_h\}.
\end{equation}

We look for $\widetilde{\bu}_h\in V_h^{l}$ satisfying the following equations on each $K\in\CT_h$:
\begin{subequations}\label{eq:local_postproc}
\begin{alignat}{2}
    (\varepsilon(\widetilde{\bu}_h),\varepsilon(\bv_K))_K&=(\CC\bsigma_h,\varepsilon(\bv_K))_K &&\quad\forall\bv_K\in(I-P_h)V^{l}_h\big|_K,\label{eq:local_postproc_a}\\
    (\widetilde{\bu}_h,\bv_K)_K&=(\bu_h,\bv_K)_K &&\quad\forall\bv_K\in P_hV^{l}_h\big|_K.\label{eq:local_postproc_b}
\end{alignat}
\end{subequations}

The main purpose of introducing the postprocessed approximation $\widetilde{\bu}_h$ is to improve the quality of the displacement field approximation. One of the advantages of \eqref{eq:local_postproc} is its local nature, enabling computation at negligible computational cost compared to resolving the original approximation scheme, with parallel computing.

The next theorem shows an a priori error estimate for $\bu-\widetilde{\bu}_h$. In contrast to \cite[Lemma 4]{LS2023}, we keep the term $\|P_h\bu-\bu_h\|_h$ explicit in the estimate, to emphasize the dependence of the superconvergence of $\widetilde{\bu}_h$ on the a priori error estimates for $\bsigma-\bsigma_h$ and $P_h\bu-\bu_h$ given in Theorem~\ref{thm:a_priori}. We also let $P_h^l:[L^2(\Omega)]^d\to V_h^l$ denote the $L^2$-projection onto $V_h^l$.

\begin{theorem}\label{thm:postproc_bound}
    Let $(\bu,\bsigma)$ solve \eqref{eq:mixed_elasticity}; let $(\bu_h,\bsigma_h)\in V_h\times S_h^g$ solve \eqref{eq:discrete_problem}; and let $\widetilde{\bu}_h\in V_h^{l}$ be defined by \eqref{eq:local_postproc}. Then, it holds that
    \begin{equation}\label{eq:apriori_postproc}
        \|\bu-\widetilde{\bu}_h\|_h \lesssim \|\bu-P_h^{l}\bu\|_h + \mu^{-1}\|\bsigma-\bsigma_h\|_0 + \|P_h\bu-\bu_h\|_h.
    \end{equation}
\end{theorem}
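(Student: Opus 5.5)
The plan is the standard Stenberg-type argument: compare $\widetilde{\bu}_h$ with the projection $P_h^{l}\bu$ and split the discrete difference into its $P_h$-component and its $(I-P_h)$-component. By the triangle inequality,
\[
\|\bu-\widetilde{\bu}_h\|_h\le \|\bu-P_h^{l}\bu\|_h+\|P_h^{l}\bu-\widetilde{\bu}_h\|_h ,
\]
so it remains to bound $\bw\eq P_h^{l}\bu-\widetilde{\bu}_h\in V_h^{l}$. Since $V_h\subset V_h^l$, the operator $P_h$ maps $V_h^l$ into itself, and we write $\bw=P_h\bw+(I-P_h)\bw$.

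\emph{Step 1: the $P_h$-component.} By \eqref{eq:local_postproc_b}, $P_h\widetilde{\bu}_h=\bu_h$. Moreover $P_hP_h^{l}=P_h$, because $V_h\subset V_h^{l}$. Hence $P_h\bw=P_h\bu-\bu_h$, and this component is bounded exactly by $\|P_h\bu-\bu_h\|_h$.

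\emph{Step 2: the $(I-P_h)$-component.} Set $\bz\eq(I-P_h)\bw$, which restricted to each $K$ belongs to $(I-P_h)V_h^l|_K$. Testing \eqref{eq:local_postproc_a} with $\bv_K=\bz|_K$ and using $\varepsilon(\bu)=\CC\bsigma$ gives
\[
(\varepsilon(\bz),\varepsilon(\bz))_K=(\varepsilon(P_h^l\bu-\bu),\varepsilon(\bz))_K+(\CC(\bsigma-\bsigma_h),\varepsilon(\bz))_K-(\varepsilon(P_h\bu-\bu_h),\varepsilon(\bz))_K .
\]
Here we use $\varepsilon(\bz)=\varepsilon(\bw)-\varepsilon(P_h\bw)$ and $\varepsilon(\widetilde{\bu}_h)$ tested against $\bz$. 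Since $\|\CC\btau\|_0\le(2\mu)^{-1}\|\btau\|_0$, Cauchy--Schwarz yields elementwise
\[
\|\varepsilon(\bz)\|_K\lesssim\|\varepsilon(\bu-P_h^l\bu)\|_K+\mu^{-1}\|\bsigma-\bsigma_h\|_K+\|\varepsilon(P_h\bu-\bu_h)\|_K .
\]

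\emph{Step 3: from the strain to the full broken norm (main obstacle).} The norm $\|\bz\|_h$ also contains jump and Dirichlet terms, which Step 2 does not control. The key observation is that $\bz|_K$ is $L^2(K)$-orthogonal to $[P_{k-1}(K)]^d$, which contains the rigid motions $RM(K)$ since $k\ge2$ (and $P_1$ in the JM case). Korn's inequality modulo rigid motions, combined with a scaling argument on the reference element, then gives
\[
h_K^{-1}\|\bz\|_K+\|\nabla\bz\|_K\lesssim\|\varepsilon(\bz)\|_K .
\]
A discrete trace inequality gives $h_E^{-1/2}\|\bz\|_E\lesssim h_K^{-1}\|\bz\|_K+\|\nabla\bz\|_K$. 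Bounding each jump $\jmp{\bz}$ and each Dirichlet trace by the two (or one) adjacent elements, we get $\|\bz\|_h\lesssim(\sum_K\|\varepsilon(\bz)\|_K^2)^{1/2}$. This step is where care is needed: the orthogonality to rigid motions must hold, and the constant must be uniform under shape regularity.

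\emph{Step 4: conclusion.} Summing the estimate of Step 2 over $K$ and noting that $\sum_K\|\varepsilon(\cdot)\|_K^2\le\|\cdot\|_h^2$, we combine the bounds:
\[
\|\bw\|_h\le\|P_h\bu-\bu_h\|_h+\|\bz\|_h\lesssim \|\bu-P_h^l\bu\|_h+\mu^{-1}\|\bsigma-\bsigma_h\|_0+\|P_h\bu-\bu_h\|_h .
\]
Together with the triangle inequality from the start, this yields \eqref{eq:apriori_postproc}.
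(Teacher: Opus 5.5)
Your proof is correct and takes the paper's route: the paper's identity $P_h^{l}\bu-\widetilde{\bu}_h=(P_h^{l}-P_h)(\bu-\widetilde{\bu}_h)+(P_h\bu-\bu_h)$ is exactly your splitting $\bw=(I-P_h)\bw+P_h\bw$, and for the $(I-P_h)$-component the paper simply cites the proof of Lemma~4 in \cite{LS2023}. Your Steps~2--3 write that cited argument out (test \eqref{eq:local_postproc_a} with the component itself, then Korn/Poincar\'e modulo rigid motions and a discrete trace inequality), and the extra term $\|\varepsilon(P_h\bu-\bu_h)\|_K$ you pick up there is harmless, since it already appears on the right-hand side of \eqref{eq:apriori_postproc}.
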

\begin{proof}
    By the triangle inequality and the identity $$P_h^{l}\bu-\widetilde{\bu}_h=(P_h^{l}-P_h)(\bu-\widetilde{\bu}_h)+(P_h\bu-\bu_h),$$ which follows from $P_h\widetilde{\bu}_h=\bu_h$, it suffices to bound $\|(P_h^{l}-P_h)(\bu-\widetilde{\bu}_h)\|_h$. This is done exactly as in \cite[proof of Lemma 4]{LS2023}, yielding $$\|(P_h^{l}-P_h)(\bu-\widetilde{\bu}_h)\|_h \lesssim \|\bu-P_h^{l}\bu\|_h+\mu^{-1}\|\bsigma-\bsigma_h\|_0.$$ The claim follows by combining these estimates.
\end{proof}

Combining Theorem~\ref{thm:postproc_bound} with Theorem~\ref{thm:a_priori}, we conclude that, for a sufficiently smooth exact solution, $\|\bu-\widetilde{\bu}_h\|_h=\mathcal{O}(h^{l})$, i.e., $\widetilde{\bu}_h$ is superconvergent.

\section{Postprocessing scheme via local residual minimization}\label{sec:postproc_minres}
Following the methodology introduced by \cite{MugaRojasVega2023}, we develop a postprocessed approximation based on local residual minimization for the elasticity setting, taking the postprocessing scheme \eqref{eq:local_postproc} as its starting point. Adapting this methodology to the vector-valued setting of elasticity requires replacing the full gradient of the scalar case with the symmetric gradient $\varepsilon(\cdot)$ throughout, both in the discrete dual norm used below and in the resulting local mixed system. As we show below, this construction naturally yields, as a dual variable, the Riesz representative of the associated residual, which will serve as the main ingredient of the a posteriori error analysis developed in Section~\ref{sec:aposteriori}.

We define a new postprocessed approximation $\bw_h$ through the following set of local residual minimization problems: find $\bw_h\in V^{l}_h$ such that for each $K\in\CT_h$,
\begin{subequations}\label{eq:local_postproc_minres}
\begin{align}
    \bw_h|_K&=\argmin_{\bv_K\in(I-P_h)V^{l}_h\big|_K}\frac{1}{2}\left\|\CC\bsigma_h-\varepsilon(\bv_K)\right\|_{*,K}^2\,,\label{eq:local_postproc_minres_a}\\
    (\bw_h,\bv_K)_K&=(\bu_h,\bv_K)_K\qquad\qquad\quad\,\forall\bv_K\in P_hV^{l}_h\big|_K,\label{eq:local_postproc_minres_b}
\end{align}
\end{subequations}
where $\|\cdot\|_{*,K}$ is a discrete dual norm defined by
\begin{align}
    \|\cdot\|_{*,K}\eq\sup_{\bv_K\in(I-P_h)V^{l}_h\big|_K}\frac{(\ \cdot\ ,\varepsilon(\bv_K))_K}{\|\varepsilon(\bv_K)\|_K}.
\end{align}

As usual in minimum-residual methods, we can write~\eqref{eq:local_postproc_minres} equivalently as the following local mixed linear system (see~\cite{CohenDahmenWelper2012}): find $\br_h\in(I-P_h)V_h^{l+1}$ and $\bw_h\in V_h^{l}$ such that for each $K\in\CT_h$,
\begin{subequations}\label{eq:local_postproc_mixed}
\begin{alignat}{2}
(\varepsilon(\br_h),\varepsilon(\bv_K))_K+(\varepsilon(\bw_h),\varepsilon(\bv_K))_K&=(\CC\bsigma_h,\varepsilon(\bv_K))_K &&\quad\forall\bv_K\in(I-P_h)V^{l+1}_h\big|_K,\label{eq:local_postproc_mixed_a}\\
(\varepsilon(\bv_K),\varepsilon(\br_h))_K&=0 &&\quad\forall\bv_K\in(I-P_h)V^{l}_h\big|_K,\label{eq:local_postproc_mixed_b}\\
(\bw_h,\bv_K)_K&=(\bu_h,\bv_K)_K &&\quad\forall\bv_K\in P_h V^{l}_h\big|_K.\label{eq:local_postproc_mixed_c}
\end{alignat}
\end{subequations}

The next result shows the relationship between the postprocessed approximation $\bw_h$ and the original postprocessed solution $\widetilde{\bu}_h$. Although $\bw_h$ is constructed via residual minimization in a discrete dual norm of the test space and $\widetilde{\bu}_h$ is obtained as the solution of an elliptic problem, both postprocessed approximations coincide.
\begin{proposition}\label{prop:postproc_equiv}
    Let $\widetilde{\bu}_h\in V_h^{l}$ solve \eqref{eq:local_postproc}, and let $\bw_h\in V_h^{l}$ solve the residual minimization problem \eqref{eq:local_postproc_minres}. Then, $\bw_h=\widetilde{\bu}_h$.
\end{proposition}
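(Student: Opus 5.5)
The plan is to work element by element. I will show that the mixed system \eqref{eq:local_postproc_mixed} forces the residual representative to vanish, so that \eqref{eq:local_postproc_mixed_a} reduces to \eqref{eq:local_postproc_a}; uniqueness then gives $\bw_h=\widetilde{\bu}_h$.

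The argument uses the nesting $(I-P_h)V^{l}_h|_K\subset(I-P_h)V^{l+1}_h|_K$. It holds because $P_h$ maps onto $V_h$, which is contained in both spaces, so $(I-P_h)$ applied to $V^l_h$ lands inside $(I-P_h)V^{l+1}_h$. I will first record well-posedness of the local problems. On $(I-P_h)V^{l+1}_h|_K$ the map $\bv\mapsto\|\varepsilon(\bv)\|_K$ is a norm: if $\varepsilon(\bv)=0$ on $K$, then $\bv$ is a rigid motion, hence lies in $[P_1(K)]^d\subset V(K)$ (recall $k\ge2$, and $k-1\ge1$ for JM as well). Then $\bv=P_h\bv$, and since $P_h\bv=0$ we get $\bv=0$. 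Consequently \eqref{eq:local_postproc} has a unique solution $\widetilde{\bu}_h$, and the residual minimization \eqref{eq:local_postproc_minres} is equivalent to the saddle-point system \eqref{eq:local_postproc_mixed}, as already stated via \cite{CohenDahmenWelper2012}. Strictly, the minimizer in \eqref{eq:local_postproc_minres_a} determines the $(I-P_h)$-component of $\bw_h$, and \eqref{eq:local_postproc_minres_b} fixes its $P_h$-component. I will read the system with $\bw_h=(I-P_h)\bw_h+P_h\bw_h$, noting that $\varepsilon(P_h\bw_h)$ enters the residual. This bookkeeping mirrors \eqref{eq:local_postproc}, where $\varepsilon(\widetilde{\bu}_h)$ also involves the full field.

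The key step is to show $\br_h=0$. Since $\br_h\in(I-P_h)V^{l+1}_h$, I will split it on each $K$ into a part in $(I-P_h)V^{l}_h|_K$ and a part $\varepsilon$-orthogonal to it. Choosing test functions in \eqref{eq:local_postproc_mixed_a} ranging over the smaller space $(I-P_h)V^l_h|_K$, and using \eqref{eq:local_postproc_mixed_b}, which says $\varepsilon(\br_h)$ is orthogonal to $\varepsilon((I-P_h)V^l_h|_K)$, gives
\[
(\varepsilon(\bw_h),\varepsilon(\bv_K))_K=(\CC\bsigma_h,\varepsilon(\bv_K))_K\quad\forall\bv_K\in(I-P_h)V^{l}_h\big|_K .
\]
Combined with \eqref{eq:local_postproc_mixed_c}, which coincides with \eqref{eq:local_postproc_b}, this is exactly the system \eqref{eq:local_postproc} satisfied by $\bw_h$. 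Uniqueness of the solution of \eqref{eq:local_postproc} then yields $\bw_h=\widetilde{\bu}_h$.

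Note that $\br_h$ need not vanish for the conclusion: it is the Riesz representative of the residual in the larger space, which is precisely what later serves as the estimator. The equivalence only requires testing with the smaller space.

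The main obstacle is the uniqueness of both local problems, i.e.\ that the kernel of $\varepsilon$ on $(I-P_h)V^{l+1}_h|_K$ is trivial. I will handle it by the rigid-motion argument above: the kernel of $\varepsilon$ consists of rigid motions, which are affine, and these lie in $V(K)$ for every family considered, including JM. The remaining steps only involve substituting test functions and carry no real difficulty.
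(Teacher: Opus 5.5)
Your argument is correct and is essentially the paper's proof: use the nesting $(I-P_h)V^{l}_h|_K\subset(I-P_h)V^{l+1}_h|_K$, test \eqref{eq:local_postproc_mixed_a} with the smaller space, remove the $\br_h$ term via \eqref{eq:local_postproc_mixed_b}, identify \eqref{eq:local_postproc_mixed_c} with \eqref{eq:local_postproc_b}, and conclude by uniqueness of \eqref{eq:local_postproc}; your Korn/rigid-motion check of that uniqueness is a reasonable addition that the paper leaves implicit. You should, however, delete the framing that the ``key step is to show $\br_h=0$'': that claim is false in general (by Lemma~\ref{lemma:rel}, $\|\varepsilon(\br_h)\|_K=\|\varepsilon(\bz_h-\bw_h)\|_K$, which is the generally nonzero quantity that drives the estimator), it contradicts your own later remark, and it is never used, since the argument only needs the $\varepsilon$-orthogonality stated in \eqref{eq:local_postproc_mixed_b}.
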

\begin{proof}
    Since $(I-P_h)V^{l}_h\big|_K\subset(I-P_h)V^{l+1}_h\big|_K$, for each $K\in\CT_h$, we may test \eqref{eq:local_postproc_mixed_a} with $\bv_K\in(I-P_h)V^{l}_h\big|_K$ and then, thanks to \eqref{eq:local_postproc_mixed_b}, we conclude that $\bw_h$ solves \eqref{eq:local_postproc}. The result follows since \eqref{eq:local_postproc} has a unique solution.
\end{proof}
Hereafter, we denote both postprocessed approximations by $\bw_h$. As a consequence of Proposition~\ref{prop:postproc_equiv}, we obtain the following.
\begin{corollary}\label{cor:postproc_bound_minres}
    The postprocessed approximation $\bw_h\in V_h^{l}$ also satisfies the a priori error estimate \eqref{eq:apriori_postproc} given in Theorem~\ref{thm:postproc_bound}.
\end{corollary}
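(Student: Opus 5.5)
The corollary follows by substitution. Proposition~\ref{prop:postproc_equiv} gives $\bw_h=\widetilde{\bu}_h$ as elements of $V_h^{l}$, so the error $\bu-\bw_h$ equals $\bu-\widetilde{\bu}_h$. The estimate \eqref{eq:apriori_postproc} of Theorem~\ref{thm:postproc_bound} has a left-hand side depending only on this difference, and a right-hand side involving only $\bu$, $\bsigma$, $\bsigma_h$ and $\bu_h$, none of which depend on the choice of postprocessing. Hence the same bound holds verbatim with $\widetilde{\bu}_h$ replaced by $\bw_h$.

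Some care is needed, because Proposition~\ref{prop:postproc_equiv} tacitly assumes that the mixed system \eqref{eq:local_postproc_mixed} is well posed. So, before invoking it, I will check that \eqref{eq:local_postproc_mixed} has a unique solution $(\br_h,\bw_h)$ on each element $K$.

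For the pair $(\br_h,\bw_h|_{(I-P_h)})$ this is a saddle-point problem. On $(I-P_h)V^{l+1}_h|_K$, the form $(\varepsilon(\cdot),\varepsilon(\cdot))_K$ is coercive. Indeed, a rigid motion is a polynomial of degree at most one lying in $V(K)$, so $P_h$ fixes it. Hence $(I-P_h)$ removes rigid motions, and the kernel of $\varepsilon$ on that space is trivial, so Korn's inequality (or a simple finite-dimensional argument) applies. The inf-sup condition holds because the trial space $(I-P_h)V^{l}_h|_K$ is contained in the test space $(I-P_h)V^{l+1}_h|_K$, so one can test with $\bv_K=\bw_h$ itself.

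This well-posedness check is the only step with any content. Once it is done, the corollary is a direct combination of Proposition~\ref{prop:postproc_equiv} and Theorem~\ref{thm:postproc_bound}, with no further obstacle expected.
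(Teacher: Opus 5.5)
Your argument is correct and matches the paper's: the corollary is stated, with no further proof, as an immediate consequence of Proposition~\ref{prop:postproc_equiv} ($\bw_h=\widetilde{\bu}_h$) substituted into Theorem~\ref{thm:postproc_bound}. Your well-posedness check of \eqref{eq:local_postproc_mixed} is a sound supplement that the paper leaves implicit; the fact it actually needs is $(I-P_h)V^{l+1}_h|_K\cap RM(K)=\{\bzero\}$, which follows from $P_h$ being idempotent with $RM(K)\subset V(K)$ and is the same fact used in Lemma~\ref{lemma:rel}. The check is not strictly required, though, since the proof of Proposition~\ref{prop:postproc_equiv} shows that any solution of the mixed system solves \eqref{eq:local_postproc} and therefore equals $\widetilde{\bu}_h$.
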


\section{A posteriori error analysis}\label{sec:aposteriori}

We now turn to the a posteriori error analysis, based on the residual representative $\br_h$ obtained from the postprocessing scheme of Section~\ref{sec:postproc_minres}.

Before introducing the a posteriori error indicator, we define the local counterpart of $\|\cdot\|_h$. For $\bv\in V_h$ and $K\in\CT_h$, let
\begin{equation}
    \|\bv\|_{h,K}^2\eq\|\varepsilon(\bv)\|_K^2+\frac{1}{2}\sum_{E\in\CE_K^0}h_E^{-1}\|\jmp{\bv}\|_E^2+\sum_{E\in\CE_K^D}h_E^{-1}\|\bv\|_E^2,
\end{equation}
so that $\|\bv\|_h^2=\sum_{K\in\CT_h}\|\bv\|_{h,K}^2$.

Now, we define the local error indicator $\eta_K$ by
\begin{align}\label{eq:etaK}
    \eta_K^2\eq\mu\|\varepsilon(\br_h)\|_K^2&+\mu\left\|\CC\bsigma_h-\varepsilon(\bw_h)\right\|_K^2\\
    &+\frac{\mu}{2}\sum_{E\in\CE_K^0}h_E^{-1}\|\jmp{\bw_h}\|_E^2+\mu\sum_{E\in\CE_K^D}h_E^{-1}\|\bw_h\|_E^2,\nonumber
\end{align}
and the global indicator $\eta_h\eq\big(\sum_{K\in\CT_h}\eta_K^2\big)^{1/2}$.

To state our reliability estimate, we make a saturation assumption, for which we introduce the following auxiliary problem: Find $\bz_h\in V_h^{l+1}$ such that for each $K\in\CT_h$,
\begin{subequations}\label{eq:local_postproc_aux}
\begin{alignat}{2}
    (\varepsilon(\bz_h),\varepsilon(\bv_K))_K&=(\CC\bsigma_h,\varepsilon(\bv_K))_K &&\quad\forall\bv_K\in(I-P_h)V^{l+1}_h\big|_K,\label{eq:local_postproc_aux_a}\\
    (\bz_h,\bv_K)_K&=(\bu_h,\bv_K)_K &&\quad\forall\bv_K\in P_h V^{l+1}_h\big|_K.\label{eq:local_postproc_aux_b}
\end{alignat}
\end{subequations}

Now, we introduce the saturation assumption.
\begin{assumption}[saturation]\label{ass:saturation}
    Let $(\br_h,\bw_h)\in V_h^{l+1}\times V_h^{l}$ solve \eqref{eq:local_postproc_mixed}; and let $\bz_h\in V_h^{l+1}$ solve \eqref{eq:local_postproc_aux}. There exists a real number $\delta\in[0,1)$, independent of $h$ and of $\mu,\lambda$ within any fixed range bounded away from the incompressible limit, such that
    \begin{equation}
        \|\varepsilon(\bu-\bz_h)\|_0\leq\delta\|\varepsilon(\bu-\bw_h)\|_0.
    \end{equation}
\end{assumption}

\begin{remark}[Saturation constant near the incompressible limit]
The independence of $\delta$ from $\mu,\lambda$ postulated above is verified numerically in Section~\ref{sec:num-smooth}, by computing the ratio $\|\varepsilon(\bu-\bz_h)\|_0/\|\varepsilon(\bu-\bw_h)\|_0$ directly for the smooth example, across the same range of Poisson ratios considered there.
\end{remark}

The following auxiliary result, together with Assumption~\ref{ass:saturation}, will be used to establish the reliability of the error indicator.

\begin{lemma}\label{lemma:rel}
    Let $(\br_h,\bw_h)\in V_h^{l+1}\times V_h^{l}$ solve \eqref{eq:local_postproc_mixed}; and let $\bz_h\in V_h^{l+1}$ solve \eqref{eq:local_postproc_aux}. Then, for all $K\in\CT_h$, the following holds true:
    \begin{equation}
        \|\varepsilon(\bz_h-\bw_h)\|_K=\|\varepsilon(\br_h)\|_K.
    \end{equation}
\end{lemma}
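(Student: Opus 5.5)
The plan is to show the stronger pointwise identity $\varepsilon(\bz_h-\bw_h)=\varepsilon(\br_h)$ on each $K\in\CT_h$. The norm equality then follows at once. The argument is a Galerkin-orthogonality comparison of the first equations of \eqref{eq:local_postproc_mixed} and \eqref{eq:local_postproc_aux}, followed by a check that the difference lies in the test space.

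\emph{Step 1 (orthogonality).} Fix $K\in\CT_h$. Subtract \eqref{eq:local_postproc_mixed_a} from \eqref{eq:local_postproc_aux_a}; both are posed on the same test space $(I-P_h)V^{l+1}_h|_K$ with the same right-hand side $(\CC\bsigma_h,\varepsilon(\bv_K))_K$. This gives
\begin{equation*}
(\varepsilon(\bz_h-\bw_h-\br_h),\varepsilon(\bv_K))_K=0\quad\forall\bv_K\in(I-P_h)V^{l+1}_h\big|_K .
\end{equation*}

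\emph{Step 2 (membership in the test space).} I will show that $\be\eq(\bz_h-\bw_h-\br_h)|_K$ lies in $(I-P_h)V^{l+1}_h|_K$, which allows the choice $\bv_K=\be$.
\begin{itemize}
\item By construction, $\br_h\in(I-P_h)V^{l+1}_h$.
\item Since $l\ge k$ in all cases, $V(K)\subset V^{l}(K)\subset V^{l+1}(K)$. Hence $P_hV^{l}_h|_K=P_hV^{l+1}_h|_K=V(K)$.
\item Then \eqref{eq:local_postproc_aux_b} and \eqref{eq:local_postproc_mixed_c} say that the $L^2(K)$-projections of $\bz_h$ and of $\bw_h$ onto $V(K)$ both equal that of $\bu_h$, namely $\bu_h|_K$. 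So $P_h\bz_h=P_h\bw_h=\bu_h$ on $K$.
\item Consequently $P_h(\bz_h-\bw_h)=0$ on $K$. Since $\bz_h-\bw_h\in V^{l+1}_h$, it equals $(I-P_h)(\bz_h-\bw_h)\in(I-P_h)V^{l+1}_h|_K$.
\item Subtracting $\br_h$ keeps $\be$ in this linear space.
\end{itemize}

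\emph{Step 3 (conclusion).} Testing Step 1 with $\bv_K=\be$ gives $\|\varepsilon(\be)\|_K^2=0$, so $\varepsilon(\bz_h-\bw_h)=\varepsilon(\br_h)$ on $K$. Taking $L^2(K)$-norms yields the claim.

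The only point needing care is Step 2, specifically the identification $P_hV^{l}_h|_K=P_hV^{l+1}_h|_K=V(K)$. This is what forces the constraints \eqref{eq:local_postproc_aux_b} and \eqref{eq:local_postproc_mixed_c} to pin down the same projection. For the JM case, where $P_h$ is replaced by the special projection of \cite{JohnsonMercier1978}, I would check that this projection is still idempotent onto $V_h$ with $V_h\subset V_h^{l}$. If so, the same reasoning applies verbatim.

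Notably, \eqref{eq:local_postproc_mixed_b} is not needed. The identity uses only that $\br_h$ is taken in the enlarged test space.
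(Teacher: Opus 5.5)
Your proof is correct and follows the paper's route: both subtract \eqref{eq:local_postproc_mixed_a} from \eqref{eq:local_postproc_aux_a} to show that $\bz_h-\bw_h-\br_h$ is $\varepsilon$-orthogonal to $(I-P_h)V^{l+1}_h\big|_K$. The paper then finishes with the dual-norm supremum over that space, using Korn's inequality so that $\|\varepsilon(\cdot)\|_K$ is a norm there and tacitly using that $\bz_h-\bw_h$ lies in it; you instead verify this membership explicitly from \eqref{eq:local_postproc_aux_b} and \eqref{eq:local_postproc_mixed_c} and test with the difference itself, which avoids Korn and gives the stronger identity $\varepsilon(\bz_h-\bw_h)=\varepsilon(\br_h)$ on $K$.
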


\begin{proof}
    Combining \eqref{eq:local_postproc_mixed_a} with \eqref{eq:local_postproc_aux_a}, we get
    \begin{align*}
        (\varepsilon(\bz_h-\bw_h),\varepsilon(\bv_K))_K&=-(\varepsilon(\bw_h),\varepsilon(\bv_K))_K+(\varepsilon(\bz_h),\varepsilon(\bv_K))_K\\
        &=(\varepsilon(\br_h),\varepsilon(\bv_K))_K-(\CC\bsigma_h,\varepsilon(\bv_K))_K+(\varepsilon(\bz_h),\varepsilon(\bv_K))_K\\
        &=(\varepsilon(\br_h),\varepsilon(\bv_K))_K
    \end{align*}
    for all $\bv_K\in(I-P_h)V^{l+1}_h\big|_K$. Since $(I-P_h)V^{l+1}_h\big|_K\cap RM(K)=\{\bzero\}$, where $RM(K)$ denotes the space of rigid-body motions on $K$, by the second Korn inequality~\cite[Theorem~42.10]{ErnGuermondII}, $\|\varepsilon(\cdot)\|_K$ defines a genuine norm on $(I-P_h)V^{l+1}_h\big|_K$, so that
    \begin{align*}
        \|\varepsilon(\bz_h-\bw_h)\|_K&=\sup_{\bv_K\in(I-
        P_h)V^{l+1}_h\big|_K\setminus\{0\}}\frac{(\varepsilon(\bz_h-\bw_h),\varepsilon(\bv_K))_K}{\|\varepsilon(\bv_K)\|_K}\\
        &=\sup_{\bv_K\in(I-P_h)V^{l+1}_h\big|_K\setminus\{0\}}\frac{(\varepsilon(\br_h),\varepsilon(\bv_K))_K}{\|\varepsilon(\bv_K)\|_K}\\
        &=\|\varepsilon(\br_h)\|_K.
    \end{align*}
\end{proof}

For the reliability estimate below, we will also use the following bounds on the elasticity and compliance tensors. Since $\mathcal{A}\btau=2\mu\btau+\lambda\,\text{tr}(\btau)I$, the triangle inequality, together with $\|\text{tr}(\btau)I\|_0\le d\|\btau\|_0$, gives
\begin{equation}\label{eq:A_bound}
    \|\mathcal{A}\btau\|_0 \leq (2\mu+d\lambda)\|\btau\|_0 \quad \forall\btau\in [L^2(\Omega)]^{d\times d}_{\text{sym}}.
\end{equation}
Similarly, $\CC\btau=\frac{1}{2\mu}\big(\btau-\frac{\lambda}{2\mu+d\lambda}\text{tr}(\btau)I\big)$, the bound on $\|\text{tr}(\btau)I\|_0$, and $\frac{2\mu+2d\lambda}{2\mu+d\lambda}\le2$, give
\begin{equation}\label{eq:C_bound}
    \|\CC\btau\|_0 \leq \mu^{-1}\|\btau\|_0 \quad \forall\btau\in [L^2(\Omega)]^{d\times d}.
\end{equation}

\begin{theorem}[reliability, compressible regime]\label{thm:reliability}
    Let $(\bu,\bsigma)$ solve the continuous problem \eqref{eq:mixed_elasticity}; let $(\bu_h,\bsigma_h)\in V_h\times S_h^g$ solve the discrete problem \eqref{eq:discrete_problem}; and let $(\br_h,\bw_h)\in V_h^{l+1}\times V_h^{l}$ solve \eqref{eq:local_postproc_mixed}. If Assumption~\ref{ass:saturation} is satisfied, then the following estimate holds:
    \begin{equation}\label{eq:reliability_standard}
        \mu^{-1/2}\|\bsigma-\bsigma_h\|_0+\mu^{1/2}\|\bu-\bw_h\|_h\lesssim\eta_h.
    \end{equation}
\end{theorem}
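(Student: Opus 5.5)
Throughout, constants may depend on $\mu,\lambda$, which is allowed in the compressible regime. The plan is to split the left-hand side of \eqref{eq:reliability_standard} into a displacement part and a stress part, bounding each by $\eta_h$.

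\textbf{Displacement part.} Since $\bu\in[H^1_D(\Omega)]^d$ has no jumps and vanishes on $\Gamma_D$, the jump and Dirichlet contributions to $\|\bu-\bw_h\|_h^2$ are exactly those of $\bw_h$. These already appear in $\eta_h^2/\mu$, so it suffices to control $\|\varepsilon(\bu-\bw_h)\|_0$. I will use the triangle inequality, Assumption~\ref{ass:saturation}, and Lemma~\ref{lemma:rel}:
\[
\|\varepsilon(\bu-\bw_h)\|_0\le\|\varepsilon(\bu-\bz_h)\|_0+\|\varepsilon(\bz_h-\bw_h)\|_0\le\delta\|\varepsilon(\bu-\bw_h)\|_0+\|\varepsilon(\br_h)\|_0 .
\]
This gives $\|\varepsilon(\bu-\bw_h)\|_0\le(1-\delta)^{-1}\|\varepsilon(\br_h)\|_0$. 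Multiplying by $\mu^{1/2}$ bounds the displacement part by $\eta_h$.

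\textbf{Stress part.} From the first equation of \eqref{eq:mixed_elasticity}, $\bsigma=\mathcal{A}\varepsilon(\bu)$. I will write
\[
\bsigma-\bsigma_h=\mathcal{A}\big(\varepsilon(\bu)-\CC\bsigma_h\big)=\mathcal{A}\big(\varepsilon(\bu-\bw_h)\big)+\mathcal{A}\big(\varepsilon(\bw_h)-\CC\bsigma_h\big).
\]
Applying \eqref{eq:A_bound} then gives
\[
\|\bsigma-\bsigma_h\|_0\le(2\mu+d\lambda)\big(\|\varepsilon(\bu-\bw_h)\|_0+\|\CC\bsigma_h-\varepsilon(\bw_h)\|_0\big).
\]
The first term is controlled by the displacement step. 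The second appears directly in $\eta_K$, summed over the elements. Multiplying by $\mu^{-1/2}$ yields $\mu^{-1/2}\|\bsigma-\bsigma_h\|_0\lesssim(1+d\lambda/(2\mu))\,\mu^{-1/2}\eta_h$, where each piece of $\eta_h$ carries a factor $\mu^{1/2}$. So the stress part is bounded by a constant of size $(2\mu+d\lambda)/\mu$ times $\eta_h$. That factor is bounded in the compressible regime but blows up as $\lambda\to\infty$, which is exactly why a separate robust estimate is needed later.

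\textbf{Main obstacle.} The only nontrivial point is the absorption step in the displacement part. It needs $\delta<1$ uniformly, which is what Assumption~\ref{ass:saturation} provides, together with the Korn-based identity of Lemma~\ref{lemma:rel}. Beyond that, the remaining work is tracking the Lamé-parameter dependence: this gives an explicit constant of order $\max\{(1-\delta)^{-1},\,(1+d\lambda/\mu)(1-\delta)^{-1}\}$.
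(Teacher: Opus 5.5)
Your proposal is correct and follows essentially the same route as the paper: you use Lemma~\ref{lemma:rel} with the saturation assumption and absorption to get $\|\varepsilon(\bu-\bw_h)\|_0\le(1-\delta)^{-1}\|\varepsilon(\br_h)\|_0$, then apply \eqref{eq:A_bound} to $\bsigma-\bsigma_h=\mathcal{A}\big(\varepsilon(\bu-\bw_h)-(\CC\bsigma_h-\varepsilon(\bw_h))\big)$, and note that the jump and Dirichlet terms of $\bu-\bw_h$ coincide with those of $\bw_h$. The only blemish is a stray factor $\mu^{-1/2}$ in your intermediate stress display; the correct bound is $(2\mu+d\lambda)\mu^{-1}\eta_h$, which is what you state in the very next sentence.
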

\begin{proof}
By Lemma~\ref{lemma:rel}, and after summing over all $K\in\CT_h$, we obtain
$$\|\varepsilon(\bz_h-\bw_h)\|_0=\|\varepsilon(\br_h)\|_0.$$
Thanks to this identity, the triangle inequality, and Assumption~\ref{ass:saturation},
\begin{equation}\label{eq:rel_disp}
    \|\varepsilon(\bu-\bw_h)\|_0\leq\frac{1}{1-\delta}\|\varepsilon(\br_h)\|_0.
\end{equation}

For the stress, using $\CC\bsigma-\varepsilon(\bu)=\bzero$,
$$\CC(\bsigma-\bsigma_h)=\varepsilon(\bu-\bw_h)-(\CC\bsigma_h-\varepsilon(\bw_h)),$$
so that, by the triangle inequality,
$$\|\CC(\bsigma-\bsigma_h)\|_0\leq\|\varepsilon(\bu-\bw_h)\|_0+\|\CC\bsigma_h-\varepsilon(\bw_h)\|_0.$$
Since $\bsigma-\bsigma_h=\mathcal{A}\big(\CC(\bsigma-\bsigma_h)\big)$ and $\|\mathcal{A}\btau\|_0\leq(2\mu+d\lambda)\|\btau\|_0$ (see \eqref{eq:A_bound}),
$$\|\bsigma-\bsigma_h\|_0\leq(2\mu+d\lambda)\big(\|\varepsilon(\bu-\bw_h)\|_0+\|\CC\bsigma_h-\varepsilon(\bw_h)\|_0\big).$$
Combined with \eqref{eq:rel_disp}, this yields
\begin{equation}\label{eq:rel_stress}
    \mu^{-1/2}\|\bsigma-\bsigma_h\|_0\lesssim\big(\mu^{1/2}+\lambda\mu^{-1/2}\big)\big(\|\varepsilon(\br_h)\|_0+\|\CC\bsigma_h-\varepsilon(\bw_h)\|_0\big).
\end{equation}

Finally, since $\bu\in [H^1_D(\Omega)]^d$ vanishes on $\Gamma_D$ and is single-valued across interior edges/faces, $\jmp{\bu-\bw_h}=\jmp{\bw_h}$ on $\CE_h^0$ and $\bu-\bw_h=-\bw_h$ on $\CE_h^D$; hence the jump and Dirichlet contributions to $\|\bu-\bw_h\|_h$ coincide exactly with those of $\bw_h$. Combining this with \eqref{eq:rel_disp} and \eqref{eq:rel_stress}, and recalling the definition of $\eta_K$, we obtain \eqref{eq:reliability_standard}.
\end{proof}

The reliability estimate of Theorem~\ref{thm:reliability} is not uniform with respect to $\lambda$: as discussed in Section~\ref{sec:elasticity_eqns}, the compliance tensor $\CC$ no longer controls the trace of the stress as $\lambda\to\infty$, and the constant in~\eqref{eq:reliability_standard} degenerates in this limit. This poses no difficulty away from the incompressible limit, where the constant in~\eqref{eq:reliability_standard} remains moderate and Theorem~\ref{thm:reliability} applies without restriction; however, as the material approaches incompressibility, an estimate valid uniformly in the Lam\'e parameters is required. Such an estimate relies on the following stability property of the continuous problem, with a constant that remains bounded as $\lambda\to\infty$.

\begin{theorem}[Robust stability]
\label{thm:robust-stability}
For any $(\bvarphi,\bpsi)\in[L^2(\Omega)]^{d\times d}_{\mathrm{sym}}\times[H^1_D(\Omega)]^d$, it holds that
\begin{equation}
\sup_{(\btau,\bv)\in[L^2(\Omega)]^{d\times d}_{\mathrm{sym}}\times[H^1_D(\Omega)]^d} \frac{\CB(\bvarphi,\bpsi;\btau,\bv)}{\mu^{-1/2}\|\btau\|_0+\mu^{1/2}\|\varepsilon(\bv)\|_0} \gtrsim \mu^{-1/2}\|\bvarphi\|_0+\mu^{1/2}\|\varepsilon(\bpsi)\|_0.
\end{equation}
\end{theorem}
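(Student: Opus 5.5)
We prove the inf-sup bound by constructing, for a given $(\bvarphi,\bpsi)$, explicit test functions $(\btau,\bv)$. First we scale out $\mu$. Write $\btau=\mu\hat\btau$, $\bvarphi=\mu\hat\bvarphi$. Then $\CB$ becomes $\mu$ times a form with $\mu=1$ and $\lambda/\mu$ in place of $\lambda$, and the norms scale consistently. So it suffices to treat $\mu=1$ with a constant independent of $\lambda\in[0,\infty)$. We decompose $\bvarphi=\bvarphi^D+\frac1d\mathrm{tr}(\bvarphi)I$ into deviatoric and spherical parts. Then $(\CC\bvarphi,\btau)=\frac12(\bvarphi^D,\btau^D)+\frac{1}{d(2+d\lambda)}(\mathrm{tr}\bvarphi,\mathrm{tr}\btau)$. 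The second coefficient is nonnegative but degenerates as $\lambda\to\infty$. This degeneracy is the whole difficulty: control of $\mathrm{tr}\bvarphi$ must come from the coupling term $(\varepsilon(\bv),\bvarphi)$ via an inf-sup (Ne\v{c}as/Stokes) argument.

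\textbf{Step 1: the trace.} Take $p=\mathrm{tr}(\bvarphi)/d$. Since $|\Gamma_D|>0$, the divergence is surjective from $[H^1_D(\Omega)]^d$ onto $L^2(\Omega)$; if $\Gamma_N=\emptyset$ we are on the mean-zero space, and we handle the mean separately as usual. So there is $\bv_1\in[H^1_D]^d$ with $\ddiv\bv_1=-p$ and $\|\varepsilon(\bv_1)\|_0\lesssim\|p\|_0$. Then
\[-(\varepsilon(\bv_1),\bvarphi)=\|p\|_0^2-(\varepsilon(\bv_1),\bvarphi^D),\]
where $(\varepsilon(\bv_1),pI)=(\ddiv \bv_1,p)=-\|p\|_0^2$. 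The second term is absorbed by Young's inequality once $\|\bvarphi^D\|_0$ is controlled.

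\textbf{Step 2: the remaining pieces.} Test with $(\btau,\bv)=(\bvarphi,-\bpsi)$. The coupling terms cancel, leaving $(\CC\bvarphi,\bvarphi)\ge\frac12\|\bvarphi^D\|_0^2$. Next test with $(\btau,\bv)=(-\varepsilon(\bpsi),0)$. This gives $-(\CC\bvarphi,\varepsilon(\bpsi))+\|\varepsilon(\bpsi)\|_0^2$, and by the bound \eqref{eq:C_bound}, $|(\CC\bvarphi,\varepsilon(\bpsi))|\le\|\bvarphi\|_0\|\varepsilon(\bpsi)\|_0$ uniformly in $\lambda$.

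\textbf{Step 3: combination.} Set $(\btau,\bv)=(\bvarphi-\alpha\varepsilon(\bpsi),\,-\bpsi+\beta\bv_1)$ with small parameters $\alpha,\beta$. The resulting value of $\CB$ is bounded below by
\[\tfrac12\|\bvarphi^D\|_0^2+\alpha\|\varepsilon(\bpsi)\|_0^2+\beta\|p\|_0^2\]
minus cross terms. These cross terms are $\alpha\|\bvarphi\|_0\|\varepsilon(\bpsi)\|_0$, $\beta\|\varepsilon(\bv_1)\|_0\|\bvarphi^D\|_0$, and $\beta(\CC\bvarphi,\varepsilon(\bv_1))$-type contributions. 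The last one is absent here because $\bv$ does not enter the $\CC$ term; the term $(\varepsilon(\bpsi),\beta\varepsilon(\bv_1))$ does not appear either, since the form has no $\bpsi$–$\bv$ coupling. We write $\|\bvarphi\|_0^2\simeq\|\bvarphi^D\|_0^2+d\|p\|_0^2$ and apply Young's inequality, choosing $\beta$ small relative to $1$ and then $\alpha$ small relative to $\beta$. This yields
\[\CB\gtrsim\|\bvarphi\|_0^2+\|\varepsilon(\bpsi)\|_0^2,\]
while $\|\btau\|_0+\|\varepsilon(\bv)\|_0\lesssim\|\bvarphi\|_0+\|\varepsilon(\bpsi)\|_0$ by Korn's inequality and the bound on $\bv_1$. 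Dividing the two estimates gives the claim.

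\textbf{Main obstacle.} The delicate point is the $\lambda$-uniform trace control in Step 1. It relies on the divergence inf-sup condition on $[H^1_D]^d$. When $\Gamma_N=\emptyset$, the constant mode of $\mathrm{tr}\bvarphi$ must be controlled separately, which we would handle with the $\frac{1}{d(2+d\lambda)}$ term or by assuming $|\Gamma_N|>0$. We must also check that none of the constants introduced by the splitting depend on $\lambda$.
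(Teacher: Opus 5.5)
The paper gives no argument of its own here; it defers to \cite[Theorem~1]{LS2023}. Your proof is a self-contained version of the standard route. After scaling to $\mu=1$:
\begin{itemize}
\item the diagonal test $(\bvarphi,-\bpsi)$ gives $(\CC\bvarphi,\bvarphi)\ge\frac12\|\bvarphi^D\|_0^2$;
\item the test $(-\varepsilon(\bpsi),\bzero)$ gives $\|\varepsilon(\bpsi)\|_0^2$, up to a cross term bounded uniformly in $\lambda$ by \eqref{eq:C_bound};
\item a right inverse of the divergence recovers the spherical part $p$ through the coupling term;
\item choosing $\beta\ll1$ and then $\alpha\ll\beta$ closes the argument.
\end{itemize}
The splitting of $(\CC\bvarphi,\btau)$, the identity $-(\varepsilon(\bv_1),\bvarphi)=\|p\|_0^2-(\varepsilon(\bv_1),\bvarphi^D)$, the list of cross terms and the parameter choice are all correct. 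When $|\Gamma_N|>0$ the proof is complete, with a constant depending only on $d$ and on the right-inverse constant of $\ddiv$.

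What your route buys over the citation is transparency. It shows that robustness rests only on three ingredients: deviatoric coercivity of $\CC$, the bound \eqref{eq:C_bound}, and surjectivity of $\ddiv$ from $[H^1_D(\Omega)]^d$ onto $L^2(\Omega)$. That surjectivity needs $|\Gamma_N|>0$, not $|\Gamma_D|>0$ as you wrote. Korn's inequality is not needed in your last step: the test norm is already $\|\varepsilon(\bv)\|_0$, and $\|\varepsilon(\bv_1)\|_0\le\|\nabla\bv_1\|_0\lesssim\|p\|_0$ directly.

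Your loose end for $\Gamma_N=\emptyset$ cannot be closed with $\lambda$-independent constants. For general $\bvarphi$ the robust estimate is false in that case. Take $\bvarphi=I$ and $\bpsi=\bzero$. Since $\int_\Omega\ddiv\bv=0$ for $\bv\in[H^1_0(\Omega)]^d$ and $\CC I=(2\mu+d\lambda)^{-1}I$, you get $\CB(I,\bzero;\btau,\bv)=(2\mu+d\lambda)^{-1}(I,\btau)$. The supremum is then at most $\mu^{1/2}(2\mu+d\lambda)^{-1}\|I\|_0$, against a right-hand side of $\mu^{-1/2}\|I\|_0$. So the best constant is at most $\mu/(2\mu+d\lambda)$, which tends to $0$ as $\lambda\to\infty$.

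Consequently, your first option (controlling the mean of $p$ through the $\frac{1}{d(2+d\lambda)}$ term) only gives a constant of order $\mu/\lambda$, and the closing check you propose would fail. That constant fits the paper's literal $\lesssim$ convention, but not the robustness announced before the theorem and used in Theorem~\ref{thm:reliability-incompressible}.

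The right fix is to restrict to $\int_\Omega\mathrm{tr}\,\bvarphi=0$. Then $p\in L^2_0(\Omega)$, and your Step 1 goes through verbatim using $\ddiv:[H^1_0(\Omega)]^d\to L^2_0(\Omega)$. This costs nothing in the application. With $\Gamma_N=\emptyset$, testing \eqref{eq:discrete_problem} with $(\btau,\bv)=(I,\bzero)\in S_h^0\times V_h$ gives $\int_\Omega\mathrm{tr}\,\bsigma_h=0$. Also $\int_\Omega\mathrm{tr}\,\bsigma=(2\mu+d\lambda)\int_\Omega\ddiv\bu=0$. So $\bvarphi=\bsigma-\bsigma_h$ satisfies the restriction. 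This case is not academic: the smooth example of Section~\ref{sec:num-smooth} is pure Dirichlet.
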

\begin{proof}
See~\cite[Theorem~1]{LS2023}.
\end{proof}

To apply Theorem~\ref{thm:robust-stability}, we require a continuous, $H^1_D(\Omega)$-conforming counterpart of the postprocessed displacement $\bw_h$. To this end, let $\osw:V_h^{l}\to V_h^{l}\cap [H^1_D(\Omega)]^d$ denote the Oswald averaging operator, which satisfies the approximation property
\begin{equation}\label{eq:oswald_approx}
    \Bigg(\sum_{K\in\CT_h}h_K^{-2}\|\bv-\osw(\bv)\|_K^2\Bigg)^{1/2} \lesssim |\bv|_J \eq \Bigg(\sum_{E\in\CE_h^0} h_E^{-1}\|\jmp{\bv}\|_E^2\Bigg)^{1/2} \quad \forall\bv\in V_h^{l};
\end{equation}
see \cite[Theorem 2.1]{KarakashianPascal2007}. By the identity
\begin{equation*}
    \|\btau\|_0^2=\left\|\frac{1}{2}(\btau+\btau^\T)\right\|_0^2+\left\|\frac{1}{2}(\btau-\btau^\T)\right\|_0^2\quad\forall\btau\in[L^2(\Omega)]^{d\times d}
\end{equation*}
and an inverse inequality \cite[Lemma 1.44]{DiPietroErn2012}, \eqref{eq:oswald_approx} also yields the stability bound
\begin{equation}\label{eq:oswald_approx_eps}
    \|\varepsilon(\bv-\osw(\bv))\|_0 \leq \|\nabla(\bv-\osw(\bv))\|_0 \lesssim |\bv|_J \quad \forall\bv\in V_h^{l}.
\end{equation}
For the residual terms appearing in the reliability estimate, we introduce the oscillation quantities
\begin{equation}\label{eq:osc_f}
    \text{osc}(\bff) \eq \Bigg(\sum_{K\in\CT_h}h_K^2\|\bff-P_h\bff\|_K^2\Bigg)^{1/2},
\end{equation}
and
\begin{equation}\label{eq:osc_g}
    \text{osc}(\bg) \eq \Bigg(\sum_{E\in\CE_h^N}h_E\|\bg-Q_E\bg\|_E^2\Bigg)^{1/2},
\end{equation}
where $\CE_h^N$ denotes the set of edges (or faces) of $\CT_h$ lying on $\Gamma_N$. With these tools in hand, we now state the reliability estimate valid uniformly in the incompressible limit.

\begin{theorem}[Reliability, incompressible limit]
\label{thm:reliability-incompressible}
Let $(\bu,\bsigma)$ solve~\eqref{eq:mixed_elasticity}; let $(\bsigma_h,\bu_h)$ solve~\eqref{eq:discrete_problem}; and let $(\br_h,\bw_h)$ solve~\eqref{eq:local_postproc_mixed}. Then, it holds that
\begin{equation}
\mu^{-1/2}\|\bsigma-\bsigma_h\|_0 + \mu^{1/2}\|\bu-\bw_h\|_h \lesssim \eta_h + \mu^{-1/2}\big(\mathrm{osc}(\bff)+\mathrm{osc}(\bg)\big),
\label{eq:reliability-incompressible-cv}
\end{equation}
with a constant that remains bounded as $\lambda\to\infty$.
\end{theorem}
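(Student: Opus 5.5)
We apply Theorem~\ref{thm:robust-stability} to the conforming error pair $(\bvarphi,\bpsi)\eq(\bsigma-\bsigma_h,\bu-\osw(\bw_h))$, which lies in $[L^2(\Omega)]^{d\times d}_{\mathrm{sym}}\times[H^1_D(\Omega)]^d$ because $\osw(\bw_h)$ is $H^1_D$-conforming. The remaining nonconforming part is handled by the triangle inequality,
\[
\|\bu-\bw_h\|_h\le\|\varepsilon(\bu-\osw(\bw_h))\|_0+\|\varepsilon(\osw(\bw_h)-\bw_h)\|_0+(\text{jump and Dirichlet terms of }\bw_h).
\]
By \eqref{eq:oswald_approx_eps}, the middle term is bounded by $|\bw_h|_J$, and the jump terms appear in $\eta_h$ directly. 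The Dirichlet trace $\bw_h|_{\Gamma_D}$ needs care: \eqref{eq:oswald_approx} is stated only with interior jumps. We therefore use the version of the Oswald operator that sets boundary nodes on $\Gamma_D$ to zero, whose estimate also includes $\sum_{E\in\CE_h^D}h_E^{-1}\|\bw_h\|_E^2$. The task thus reduces to bounding $\mu^{-1/2}\|\bvarphi\|_0+\mu^{1/2}\|\varepsilon(\bpsi)\|_0$ by the robust stability inequality.

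By \eqref{eq:primal_mixed_bilinear}, for any test pair $(\btau,\bv)$,
\[
\CB(\bvarphi,\bpsi;\btau,\bv)=(\bff,\bv)+\langle\bg,\bv\rangle_{\Gamma_N}-(\CC\bsigma_h-\varepsilon(\osw(\bw_h)),\btau)+(\varepsilon(\bv),\bsigma_h).
\]
We split this into two parts.

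\emph{The $\btau$-part.} We write $\CC\bsigma_h-\varepsilon(\osw(\bw_h))=(\CC\bsigma_h-\varepsilon(\bw_h))+\varepsilon(\bw_h-\osw(\bw_h))$. Cauchy--Schwarz then gives the bound $\mu^{-1/2}\|\btau\|_0\cdot\mu^{1/2}\big(\|\CC\bsigma_h-\varepsilon(\bw_h)\|_0+|\bw_h|_J\big)$, which is $\lesssim \mu^{-1/2}\|\btau\|_0\,\eta_h$ with $\lambda$-independent constants. This is the key point: we never invert $\CC$, so \eqref{eq:A_bound} is avoided.

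\emph{The $\bv$-part.} We must bound $R(\bv)\eq(\bff,\bv)+\langle\bg,\bv\rangle_{\Gamma_N}-(\bsigma_h,\varepsilon(\bv))$. Since $\bsigma_h\in S_h^g\subset H(\ddiv)$, elementwise integration by parts gives
\[
R(\bv)=(\bff+\ddiv\bsigma_h,\bv)+\langle\bg-Q_h\bg,\bv\rangle_{\Gamma_N}.
\]
The second equation of \eqref{eq:discrete_problem} gives $\ddiv\bsigma_h=-P_h\bff$ (for JM, via the special projection). Hence $(\bff+\ddiv\bsigma_h,\bv)=(\bff-P_h\bff,\bv-P_h^0\bv)$, where $P_h^0$ is the elementwise $L^2$-projection onto piecewise constants. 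Similarly, $\langle\bg-Q_E\bg,\bv-\bar\bv_E\rangle_E$ holds with $\bar\bv_E$ the edge mean. Local Poincaré and trace inequalities then bound these by $\mathrm{osc}(\bff)\|\nabla\bv\|_0$ and $\mathrm{osc}(\bg)\|\nabla\bv\|_0$. Finally, Korn's first inequality on $H^1_D$ (using $|\Gamma_D|>0$) gives $\|\nabla\bv\|_0\lesssim\|\varepsilon(\bv)\|_0$. Writing $\|\varepsilon(\bv)\|_0=\mu^{-1/2}\cdot\mu^{1/2}\|\varepsilon(\bv)\|_0$ yields the contribution $\mu^{-1/2}(\mathrm{osc}(\bff)+\mathrm{osc}(\bg))$.

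Dividing by $\mu^{-1/2}\|\btau\|_0+\mu^{1/2}\|\varepsilon(\bv)\|_0$ and invoking Theorem~\ref{thm:robust-stability} gives the bound for $\mu^{-1/2}\|\bsigma-\bsigma_h\|_0+\mu^{1/2}\|\varepsilon(\bu-\osw(\bw_h))\|_0$. Combining this with the first paragraph proves \eqref{eq:reliability-incompressible-cv}. No saturation assumption is needed, and every constant involves only $\mu$-scalings, shape regularity and Korn/Oswald constants, all independent of $\lambda$.

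The main obstacle is the $\bv$-residual. One must check carefully that $\ddiv\bsigma_h=-P_h\bff$ holds exactly (including the JM case), and that the Neumann datum mismatch $\bg-Q_h\bg$ is orthogonal to constants on each edge so that the $h_E^{1/2}$ weight appears. A secondary point is the Dirichlet boundary treatment of $\osw$, which must be arranged so that its error is controlled by the Dirichlet terms already present in $\eta_K$.
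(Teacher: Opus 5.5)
Your proposal is correct and follows the paper's proof essentially step for step. You apply robust stability to $(\bsigma-\bsigma_h,\bu-\osw(\bw_h))$, bound the $\btau$-part by $\|\CC\bsigma_h-\varepsilon(\bw_h)\|_0+|\bw_h|_J$ via \eqref{eq:oswald_approx_eps}, reduce the $\bv$-part to data oscillation through the discrete equilibrium equation and Korn's inequality, and use the triangle inequality to return from $\osw(\bw_h)$ to $\bw_h$. Your explicit $\Gamma_D$ terms in the Oswald estimate and your edge-mean/trace argument for $\mathrm{osc}(\bg)$ fill in details the paper leaves implicit; the $\Gamma_D$ terms are needed for $\osw$ to map into $[H^1_D(\Omega)]^d$ and are absent from \eqref{eq:oswald_approx} as stated.
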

\begin{proof} 
By Theorem~\ref{thm:robust-stability}, applied with $(\bvarphi,\bpsi)=(\bsigma-\bsigma_h,\bu-\osw(\bw_h))$,
\begin{align*}
\mu^{-1/2}\|\bsigma-\bsigma_h\|_0\ +\ & \mu^{1/2}\|\varepsilon(\bu-\osw(\bw_h))\|_0\\
&\lesssim \mu^{1/2}\|\CC\bsigma_h-\varepsilon(\osw(\bw_h))\|_0 + \mu^{-1/2}\big(\mathrm{osc}(\bff)+\mathrm{osc}(\bg)\big).
\end{align*}
Indeed, using the definition of $\CB$, the equations satisfied by $(\bsigma,\bu)$, and the discrete problem, we have
\begin{align*}
\CB(\bsigma-\bsigma_h,\bu-\osw(\bw_h);\btau,\bv)
={}&(\CC\bsigma_h-\varepsilon(\osw(\bw_h)),\btau)\\
&+(\bff-P_h\bff,\bv)
+\langle\bg-Q_h\bg,\bv\rangle_{\Gamma_N}.
\end{align*}
Since $P_h$ is an $L^2$-projection,
\begin{align*}
|(\bff-P_h\bff,\bv)|
&=|(\bff-P_h\bff,\bv-P_h\bv)|\\
&\lesssim
\left(\sum_{K\in\mathcal T_h}
h_K^2\|\bff-P_h\bff\|_K^2\right)^{1/2}
\|\nabla\bv\|_0\lesssim
\mathrm{osc}(\bff)\|\varepsilon(\bv)\|_0,
\end{align*}
where the second inequality uses the approximation property of $P_h$, and the last follows from the second Korn inequality~\cite[Theorem~42.10]{ErnGuermondII}. Similarly,
\[
|\langle\bg-Q_h\bg,\bv\rangle_{\Gamma_N}|
\lesssim
\mathrm{osc}(\bg)\|\varepsilon(\bv)\|_0.
\]
By the triangle inequality and~\eqref{eq:oswald_approx_eps},
\begin{equation*}
\|\CC\bsigma_h-\varepsilon(\osw(\bw_h))\|_0 \leq \|\CC\bsigma_h-\varepsilon(\bw_h)\|_0 + |\bw_h|_J,
\end{equation*}
and, similarly, $\|\varepsilon(\bu-\bw_h)\|_0 \leq \|\varepsilon(\bu-\osw(\bw_h))\|_0 + |\bw_h|_J$. As in the proof of Theorem~\ref{thm:reliability}, the jump and Dirichlet contributions to $\|\bu-\bw_h\|_h$ coincide exactly with those of $\bw_h$. Combining the above,
\begin{align*}
\mu^{-1/2}\|\bsigma-\bsigma_h\|_0 + \mu^{1/2}&\|\bu-\bw_h\|_h\\
&\lesssim \mu^{1/2}\Big(\|\CC\bsigma_h-\varepsilon(\bw_h)\|_0 + |\bw_h|_J\Big) + \mu^{-1/2}\big(\mathrm{osc}(\bff)+\mathrm{osc}(\bg)\big).
\end{align*}
Since $\|\CC\bsigma_h-\varepsilon(\bw_h)\|_K^2$ and $|\bw_h|_J^2$ are each terms of the sum defining $\eta_K^2$, the result follows.
\end{proof}

\begin{remark}
Unlike the proof of Theorem~\ref{thm:reliability}, the residual $\br_h$ plays no essential role above. We nonetheless retain the same indicator $\eta_h$ in both regimes, so that a single, unified estimator drives mesh adaptivity regardless of the value of $\lambda$; the residual $\br_h$ will again be essential for the efficiency estimate of Theorem~\ref{thm:local_efficiency}.
\end{remark}

We conclude this section with a local efficiency estimate, showing that the indicator $\eta_K$ is bounded above by the local error, up to a constant that remains bounded as $\lambda\to\infty$.

\begin{theorem}[local efficiency]\label{thm:local_efficiency}
    Let $(\bu,\bsigma)$ solve the continuous problem \eqref{eq:mixed_elasticity}; let $(\bu_h,\bsigma_h)\in V_h\times S_h^g$ solve the discrete problem \eqref{eq:discrete_problem}; and let $(\br_h,\bw_h)\in V_h^{l+1}\times V_h^{l}$ solve \eqref{eq:local_postproc_mixed}. Then, the following estimate holds true for all $K\in\CT_h$, with a constant that remains bounded as $\lambda\to\infty$:
    \begin{equation}\label{eq:local_efficiency}
        \eta_K\lesssim\mu^{-1/2}\|\bsigma-\bsigma_h\|_K+\mu^{1/2}\|\bu-\bw_h\|_{h,K}.
    \end{equation}
\end{theorem}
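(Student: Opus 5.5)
We bound the four contributions to $\eta_K^2$ in \eqref{eq:etaK} one at a time, each by the local error on $K$.

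\emph{Jump and Dirichlet terms.} As in the proof of Theorem~\ref{thm:reliability}, $\bu\in[H^1_D(\Omega)]^d$ gives $\jmp{\bw_h}=\jmp{\bw_h-\bu}$ on interior edges/faces and $\bw_h=\bw_h-\bu$ on $\Gamma_D$. Hence
\[
\frac{\mu}{2}\sum_{E\in\CE_K^0}h_E^{-1}\|\jmp{\bw_h}\|_E^2+\mu\sum_{E\in\CE_K^D}h_E^{-1}\|\bw_h\|_E^2\le\mu\|\bu-\bw_h\|_{h,K}^2 .
\]
This step uses no constants at all.

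\emph{Consistency term.} Since $\CC\bsigma=\varepsilon(\bu)$,
\[
\CC\bsigma_h-\varepsilon(\bw_h)=\CC(\bsigma_h-\bsigma)+\varepsilon(\bu-\bw_h).
\]
The bound \eqref{eq:C_bound}, $\|\CC\btau\|_K\le\mu^{-1}\|\btau\|_K$, holds elementwise because $\CC$ acts pointwise. Therefore
\[
\mu^{1/2}\|\CC\bsigma_h-\varepsilon(\bw_h)\|_K\le\mu^{-1/2}\|\bsigma-\bsigma_h\|_K+\mu^{1/2}\|\varepsilon(\bu-\bw_h)\|_K .
\]
The constant is $1$, independent of $\lambda$. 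This is exactly why the compliance tensor, and not $\mathcal{A}$, should appear here.

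\emph{Residual term.} This is the main step, and we avoid bubble functions by using the dual characterization of $\br_h$.
\begin{enumerate}
\item Take $\bv_K=\br_h|_K\in(I-P_h)V^{l+1}_h|_K$ in \eqref{eq:local_postproc_mixed_a}. This gives
\[
\|\varepsilon(\br_h)\|_K^2=(\CC\bsigma_h-\varepsilon(\bw_h),\varepsilon(\br_h))_K .
\]
\item Cauchy--Schwarz then yields $\|\varepsilon(\br_h)\|_K\le\|\CC\bsigma_h-\varepsilon(\bw_h)\|_K$.
\item Alternatively, write it as a supremum over $(I-P_h)V^{l+1}_h|_K$, which is a genuine norm there by Korn's inequality as in Lemma~\ref{lemma:rel}. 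Either way, $\mu^{1/2}\|\varepsilon(\br_h)\|_K$ is controlled by the bound already obtained for the consistency term.
\end{enumerate}
No $\bf$-dependence enters, because \eqref{eq:local_postproc_mixed_a} involves only $\CC\bsigma_h$ and $\bw_h$. So no oscillation term appears.

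\emph{Conclusion.} Sum the three bounds and take square roots. Every constant is an absolute number (at most $2$ or so), so the estimate holds uniformly in $\lambda$, and in $\mu$ after the weighting.

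The only point needing care is the choice of $\br_h$ as an admissible test function in \eqref{eq:local_postproc_mixed_a}. This is legitimate because $\br_h$ lies in $(I-P_h)V_h^{l+1}$ by construction. I therefore expect no real obstacle. The subtle issue is only to make sure that $\|\varepsilon(\bu-\bw_h)\|_K$ is indeed part of $\|\bu-\bw_h\|_{h,K}$, which it is by definition.
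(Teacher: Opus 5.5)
Your proposal is correct and follows essentially the same route as the paper: you test \eqref{eq:local_postproc_mixed_a} with $\br_h|_K$ itself, so no bubble functions are needed. You bound the consistency term via $\CC\bsigma=\varepsilon(\bu)$ and the elementwise version of \eqref{eq:C_bound}, and you identify the jump and Dirichlet terms with those of $\|\bu-\bw_h\|_{h,K}$. The only cosmetic difference is that you test before substituting $\CC\bsigma=\varepsilon(\bu)$, which gives the intermediate bound $\|\varepsilon(\br_h)\|_K\le\|\CC\bsigma_h-\varepsilon(\bw_h)\|_K$; the paper substitutes first, and both orderings lead to the same final estimate with $\lambda$-independent constants.
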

\begin{proof}
Since $\CC\bsigma-\varepsilon(\bu)=\bzero$, the definitional equation \eqref{eq:local_postproc_mixed_a} of $\br_h$ gives, for all $\bv_K\in(I-P_h)V^{l+1}_h\big|_K$,
\begin{align*}
    (\varepsilon(\br_h),\varepsilon(\bv_K))_K &= (\CC\bsigma_h-\CC\bsigma,\varepsilon(\bv_K))_K + (\varepsilon(\bu)-\varepsilon(\bw_h),\varepsilon(\bv_K))_K\\
    &= -(\CC(\bsigma-\bsigma_h),\varepsilon(\bv_K))_K + (\varepsilon(\bu-\bw_h),\varepsilon(\bv_K))_K.
\end{align*}
Testing with $\bv_K=\br_h\big|_K\in(I-P_h)V^{l+1}_h\big|_K$ and applying the Cauchy-Schwarz inequality, together with~\eqref{eq:C_bound},
\begin{equation*}
    \mu^{1/2}\|\varepsilon(\br_h)\|_K^2 \leq \big(\mu^{1/2}\|\varepsilon(\bu-\bw_h)\|_K + \mu^{-1/2}\|\bsigma-\bsigma_h\|_K\big)\|\varepsilon(\br_h)\|_K,
\end{equation*}
whence
\begin{equation}\label{eq:eff_r_weighted}
    \mu^{1/2}\|\varepsilon(\br_h)\|_K \leq \mu^{1/2}\|\varepsilon(\bu-\bw_h)\|_K + \mu^{-1/2}\|\bsigma-\bsigma_h\|_K.
\end{equation}

Similarly, using $\CC\bsigma_h-\varepsilon(\bw_h) = -\CC(\bsigma-\bsigma_h)+\varepsilon(\bu-\bw_h)$, the triangle inequality and~\eqref{eq:C_bound} give
$$\|\CC\bsigma_h-\varepsilon(\bw_h)\|_K \leq \|\CC(\bsigma-\bsigma_h)\|_K + \|\varepsilon(\bu-\bw_h)\|_K \leq \mu^{-1}\|\bsigma-\bsigma_h\|_K + \|\varepsilon(\bu-\bw_h)\|_K,$$
whence
\begin{equation}\label{eq:eff_Csigma}
    \mu^{1/2}\|\CC\bsigma_h-\varepsilon(\bw_h)\|_K \leq \mu^{-1/2}\|\bsigma-\bsigma_h\|_K + \mu^{1/2}\|\varepsilon(\bu-\bw_h)\|_K.
\end{equation}

In addition, as in the proof of Theorem~\ref{thm:reliability}, the jump and Dirichlet contributions to $\eta_K$ coincide exactly with those of $\|\bu-\bw_h\|_{h,K}$.

Finally, adding \eqref{eq:eff_r_weighted} and \eqref{eq:eff_Csigma}, and using that the jump and Dirichlet terms in $\eta_K$ and $\|\bu-\bw_h\|_{h,K}$ coincide, we conclude that \eqref{eq:local_efficiency} holds.
\end{proof}

Hence, the estimate holds in both the compressible regime and the incompressible limit, in contrast to the reliability estimates of Theorems~\ref{thm:reliability} and~\ref{thm:reliability-incompressible}.

Notably, the proof above requires no auxiliary bubble functions: the test function $\bv_K=\br_h|_K$ used throughout is simply the residual itself, already available at no additional cost from the local residual minimization problem of Section~\ref{sec:postproc_minres}.

\begin{remark}[A single estimator for both regimes]
Both Theorem~\ref{thm:reliability} and Theorem~\ref{thm:reliability-incompressible} bound the same quantity, $\mu^{-1/2}\|\bsigma-\bsigma_h\|_0+\mu^{1/2}\|\bu-\bw_h\|_h$, by the same indicator $\eta_h$; only the dependence of the reliability constant on $\mu,\lambda$ differs between the two regimes. In~\cite{LS2023}, the compressible-regime estimate rests instead on the Prager-Synge hypercircle principle, an identity intrinsic to the norm $\|\cdot\|_\CC$ induced by the compliance tensor. Since $\|\cdot\|_\CC$ is no longer a norm as $\lambda\to\infty$, no such identity is available in the incompressible limit, and a second, distinct estimator, based on the continuous robust inf-sup estimate for the primal mixed formulation, is required for that regime. Here, by contrast, the transition between regimes is carried entirely by the elementary bound $\|\CA\btau\|_0\le(2\mu+d\lambda)\|\btau\|_0$, which remains valid, with a growing but finite constant, for every finite $\lambda$; a single indicator and comparison norm thus serve both regimes.
\end{remark}

\begin{remark}[Two mechanisms behind the same indicator]\label{rem:two-mechanisms}
The two theorems rely on different mechanisms: Theorem~\ref{thm:reliability} rests on Assumption~\ref{ass:saturation}, observed numerically in Section~\ref{sec:num-smooth} to hold for moderate $\lambda$ but to degrade as $\nu\to1/2$; Theorem~\ref{thm:reliability-incompressible}, in contrast, relies on the robust stability estimate of Theorem~\ref{thm:robust-stability} and requires no saturation assumption, making it the appropriate certificate of reliability precisely where Assumption~\ref{ass:saturation} is not guaranteed to hold.
\end{remark}

\begin{remark}[Trade-off with the hypercircle estimate]
This unification comes at a price: the hypercircle-based estimate of~\cite[Theorem~7]{LS2023} is a genuine identity, with effectivity index equal to one up to data oscillation, while the estimator introduced here, built from triangle and Cauchy-Schwarz inequalities, cannot attain this exact equality. The trade-off is one of unification and locality, a single, purely local construction valid across the whole range of compressibility, against the sharper, but regime-specific and globally coupled, hypercircle estimate.
\end{remark}

\section{Numerical examples}\label{sec:numerical_examples}

We validate the theoretical results of the previous sections with three numerical examples, implemented in Firedrake~\cite{FiredrakeUserManual}. Only one exactly symmetric mixed finite element family is considered: the Johnson-Mercier element (JM), which considers linear displacements and linear, $H({\rm div})$-conforming stresses on an Alfeld split. In order to implement the non-standard projection $P_h$, some observations in~\cite[Lemma 2]{JohnsonMercier1978}, along with the Slate package~\cite{slate} are used. In turn, postprocesses are obtained by enforcing a Lagrange multiplier and by using the projection $P_h$.

The displacement space is given by discontinuous linear elements, i.e.
\begin{equation}
V_h = \{\bv\in[L^2(\Omega)]^d \mid \bv|_K \in [P_1(K)]^d \ \ \forall K\in\CT_h\}.
\end{equation}
For each example, we report the relative errors
\begin{equation}
e_0^{\bsigma} = \frac{\|\bsigma-\bsigma_h\|_0}{\|\bsigma\|_0}, \qquad
e_h^{\bu} = \frac{\|\bu-\bw_h\|_h}{\|\bu\|_h},
\end{equation}
and the effectivity index
\begin{equation}
c_{\text{eff}} = \frac{\eta_h}{\mu^{-1/2}\|\bsigma-\bsigma_h\|_0+\mu^{1/2}\|\bu-\bw_h\|_h}.
\end{equation}
When the exact solution is unavailable, $\bsigma$ and $\bu$ are replaced in these definitions by a reference solution computed on a uniformly refined mesh. Adaptive computations follow the standard loop
\[
\text{SOLVE} \to \text{ESTIMATE} \to \text{MARK} \to \text{REFINE} \to \text{SOLVE} \to \cdots,
\]
with D\"orfler marking~\cite{Dorfler1996}: recalling the local indicator $\eta_K$ of~\eqref{eq:etaK}, an element $K\in\CT_h$ is refined whenever $\eta_K^2\ge\theta\max_{K'\in\CT_h}\eta_{K'}^2$, $\theta=1/4$. We denote by $N$ the number of elements of $\CT_h$. In what follows, we specify the material by its Young's modulus $E$ and Poisson's ratio $\nu\in(0,1/2)$, related to the Lam\'e parameters by
\begin{equation}
\lambda = \frac{E\nu}{(1+\nu)(1-2\nu)}, \qquad \mu = \frac{E}{2(1+\nu)}.
\end{equation}
The incompressible limit $\lambda\to\infty$ corresponds to $\nu\to1/2$.

\subsection{A smooth solution}
\label{sec:num-smooth}

To verify the predicted convergence rates, we let $\Omega=(0,1)^2$, $\Gamma_D=\partial\Omega$, $\Gamma_N=\emptyset$, and take the exact displacement
\begin{equation}
u_1(x,y) = \sin(\pi x)\sin(\pi y), \qquad u_2(x,y) = -\sin(\pi x)\sin(\pi y),
\end{equation}
with body force $\bff=-\ddiv\bsigma$ and stress $\bsigma=\CC^{-1}\varepsilon(\bu)$. Since $\ddiv\bu\not\equiv0$, the exact stress depends genuinely on the Lam\'e parameters, so this example already probes the incompressible limit despite the absence of singularities. We fix $E=1$.

A single indicator $\eta_h$, and a single comparison norm, are used throughout, both in the compressible regime and in the incompressible limit. 

As noted following the proof of Theorem~\ref{thm:a_priori}, for a sufficiently smooth exact solution the expected optimal rate for the stress in the $L^2$-norm is $\CO(h^{k+1})$ generically, and $\CO(h^2)$ for the JM method, whose stress and displacement spaces are both piecewise linear; by Corollary~\ref{cor:postproc_bound_minres}, the postprocessed displacement $\bw_h$ attains the same order in the broken energy norm $\|\cdot\|_h$, a superconvergent rate for this quantity. Table~\ref{tab:smooth} confirms that the expected rate is attained for both values of $\nu$.

In particular, the optimal rate is attained without degradation as $\nu\to1/2$, confirming that the method is free of the locking phenomenon that affects standard primal discretizations in the incompressible limit. The larger magnitude of $e_h^{\bu}$ observed for $\nu=0.4999$ is consistent with the theory: the quantity controlled uniformly in $\lambda$ by Theorem~\ref{thm:reliability-incompressible} is the weighted combination $\mu^{-1/2}\|\bsigma-\bsigma_h\|_0+\mu^{1/2}\|\bu-\bw_h\|_h$, rather than $\|\bu-\bw_h\|_h$ alone, and $\mu^{1/2}\to0$ as $\nu\to1/2$.

\begin{table}[h]
\centering
\small
\caption{Smooth example: relative errors and orders of convergence for JM elements, uniform refinement, $\nu=0.3$ and $\nu=0.4999$.}
\label{tab:smooth}
\begin{tabular}{lrccccccc}
\hline
& $N$ & $e_0^{\bsigma}$ & (oc) & $e_h^{\bu}$ & (oc) & $\eta_h$ & (oc) & $c_{\rm eff}$\\
\hline
\multirow[t]{6}{*}{$\nu=0.3$}
& 2134 & 1.54e-02 & - & 2.49e-02 & - & 5.15e-02 & - & 0.381 \\
 & 9248 & 3.67e-03 & 1.85 & 5.67e-03 & 1.90 & 1.20e-02 & 1.87 & 0.379 \\
 & 35986 & 9.42e-04 & 1.93& 1.43e-03 & 1.95 & 3.08e-03 & 1.93 & 0.380 \\
  & 143192 & 2.38e-04 & 1.94 & 3.60e-04 & 1.95 & 7.77e-04 & 1.94 & 0.380 \\
   & 570094 & 5.96e-05 & 1.97 & 9.00e-05 & 1.98 & 1.95e-04 & 1.97 & 0.381 \\
   & 2274938 & 1.49e-05 & 1.98 & 2.25e-05 & 1.98 & 4.88e-05 & 1.98 & 0.381 \\
\hline

\multirow[t]{5}{*}{$\nu=0.4999$}
& 2134 & 1.49e-02 & - & 3.32e+01 & - & 6.28e+01 & - & 0.360 \\
& 9248 & 3.76e-03 & 1.77 & 8.11e+00 & 1.82 & 1.58e+01 & 1.78 & 0.364 \\
& 35986 & 9.32e-04 & 1.97 & 1.98e+00 & 2.00 & 3.91e+00 & 1.98 & 0.365 \\
& 143192 & 2.33e-04 & 1.95 & 4.92e-01 & 1.96 & 9.78e-01 & 1.95 & 0.366 \\
& 570094 & 5.80e-05 & 1.98 & 1.22e-01 & 1.99 & 2.43e-01 & 1.98 & 0.366 \\
& 2274938 & 1.45e-05 & 1.99 & 3.04e-02 & 1.99 & 6.07e-02 & 1.99 & 0.366 \\
\hline
\end{tabular}
\end{table}

Since the exact solution is known, this example also allows us to verify the behaviour of the saturation constant $\delta$ of Assumption~\ref{ass:saturation}, used in the proof of Theorem~\ref{thm:reliability}. Table~\ref{tab:saturation} reports the computed ratio
\begin{equation}
\delta_h(\nu) \eq \frac{\|\varepsilon(\bu-\bz_h)\|_0}{\|\varepsilon(\bu-\bw_h)\|_0}
\end{equation}
for JM, across a range of mesh sizes, characterized equivalently by the number of elements $N$ or the mesh parameter $h\sim N^{-1/2}$, and Poisson ratios $\nu$: the ratio remains bounded away from one for $\nu\le0.3$, but approaches, and eventually exceeds, one as $\nu$ increases further, reaching or surpassing this threshold already at $\nu=0.35$ (equivalently, $h$). This is consistent with the two-mechanism picture of Remark~\ref{rem:two-mechanisms}: reliability in this more compressible regime is certified by Theorem~\ref{thm:reliability}, under Assumption~\ref{ass:saturation}; beyond the point where the saturation assumption ceases to hold, reliability of the same indicator $\eta_h$ is instead certified by Theorem~\ref{thm:reliability-incompressible}, which does not rely on this assumption.

\begin{table}[h]
\centering
\small
\caption{Smooth example: computed saturation ratio $\delta_h(\nu)$, JM method, for varying $N$ and $\nu$.}
\label{tab:saturation}
\begin{tabular}{rcccc}
\hline
$N$ & $\nu=0.2$ & $\nu=0.25$ & $\nu=0.3$ & $\nu=0.35$ \\
\hline
$2134$ & $0.90$ & $0.92$ & $0.95$ & $0.98$ \\
$9248$ & $0.91$ & $0.94$ & $0.97$ & $\geq 1$ \\
$35986$ & $0.93$ & $0.95$ & $0.98$ & $\geq 1$ \\
$143192$ & $0.94$ & $0.96$ & $0.99$ & $\geq 1$ \\
$570094$ & $0.95$ & $0.96$ & $0.99$ & $\geq 1$ \\
\hline
\end{tabular}
\end{table}

Figure~\ref{fig:smooth-ceff} shows $c_{\text{eff}}=\eta_h/\big(\mu^{-1/2}\|\bsigma-\bsigma_h\|_0+\mu^{1/2}\|\bu-\bw_h\|_h\big)$ as a function of $\nu$, for $\nu\in\{0.3,0.4,0.45,0.49,0.499,0.4999\}$: the index remains uniformly bounded in $\nu$, empirically confirming the robustness guaranteed by Theorem~\ref{thm:reliability-incompressible} in the entire range of compressibility.

\begin{figure}[h]
\centering
\includegraphics[width=0.6\textwidth]{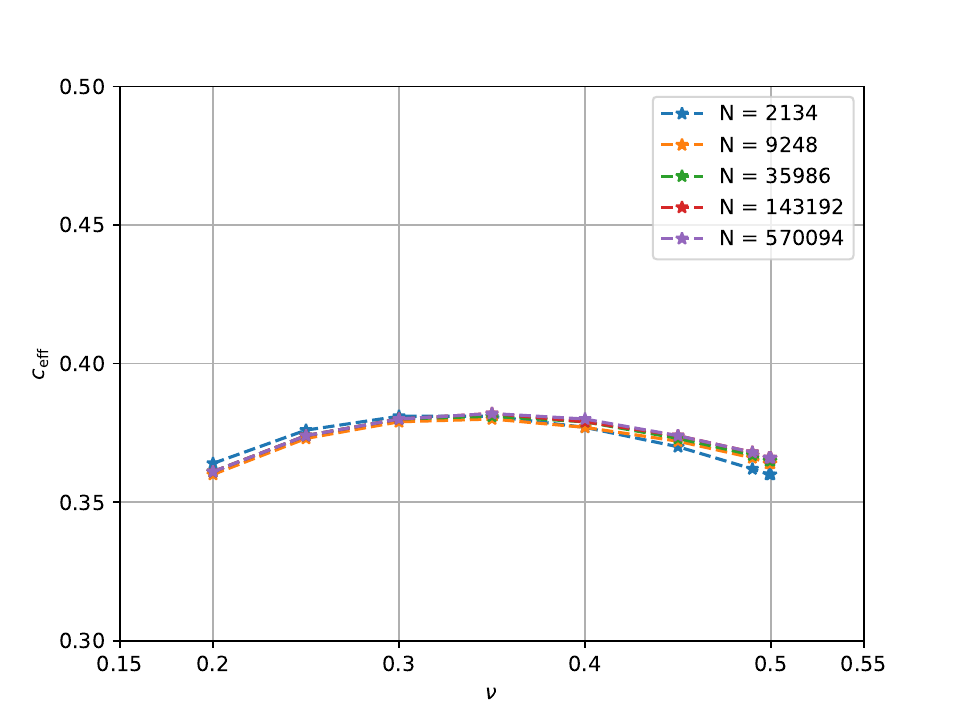}
\caption{Smooth example: effectivity index $c_{\text{eff}}$ as a function of $\nu$.}
\label{fig:smooth-ceff}
\end{figure}

\subsection{An L-shaped domain}
\label{sec:num-Lshape}

Next, we consider the benchmark of~\cite[Section 8.2]{LS2023}, with a known singular solution. The domain
\[
\Omega = \{(x,y): |x|+|y|\le \sqrt2\,a\} \setminus \{(x,y): |x+a/\sqrt2|+|y|\le a/\sqrt2\},
\]
with $a=1$, has a reentrant corner of interior angle $3\pi/2$, and traction boundary conditions are imposed on the whole boundary, $\Gamma_N=\Gamma$, $\Gamma_D=\emptyset$. Since $\Gamma=\Gamma_N$, the displacement $\bu$ is determined only up to a rigid body motion. Consequently, a Lagrange multiplier is introduced in the implementation to enforce uniqueness. In polar coordinates centered at the reentrant corner, the exact displacement is
\begin{align*}
u_x &= \frac{1}{2\mu}r^{\alpha}\big((\kappa-Q(\alpha+1))\cos(\alpha\theta)-\alpha\cos((\alpha-2)\theta)\big),\\
u_y &= \frac{1}{2\mu}r^{\alpha}\big((\kappa+Q(\alpha+1))\sin(\alpha\theta)+\alpha\sin((\alpha-2)\theta)\big),
\end{align*}
with $\alpha=0.544483737$, $Q=0.543075579$, $\kappa=3-4\nu$. We fix $E=1$ and consider $\nu=0.3$ and $\nu=0.4999$.

Since $\bu\in H^{1+\alpha}(\Omega)$ only, with $\alpha<0.55$~\cite{SzaboBabuska1991}, uniform refinement yields the suboptimal rate $\CO(N^{-\alpha/2})$, independently of the polynomial degree. Figure~\ref{fig:Lshape-conv} compares the convergence history of $\eta_h$ under adaptive refinement, driven by the indicator of Section~\ref{sec:aposteriori}, against uniform refinement, for the JM method and both values of $\nu$: adaptivity recovers the optimal rate $\CO(N^{-k/2})$ in each case. Figure~\ref{fig:Lshape-mesh} displays a representative adaptively refined mesh, confirming that refinement concentrates at the reentrant corner.

\begin{figure}[h]
\centering
 \includegraphics[width=0.49\textwidth]{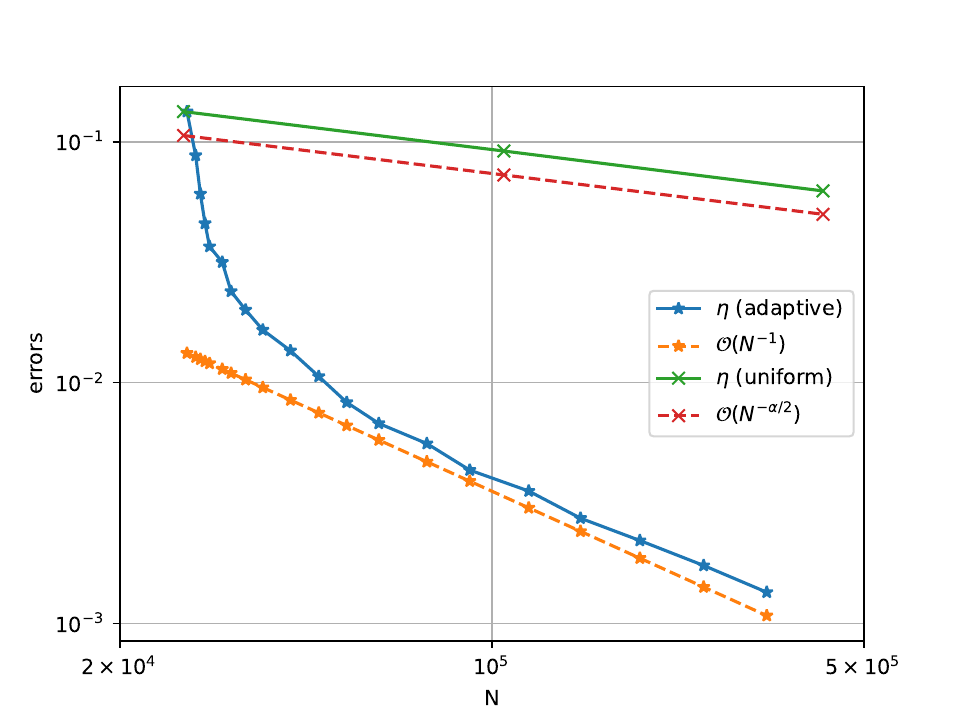}
 \includegraphics[width=0.49\textwidth]{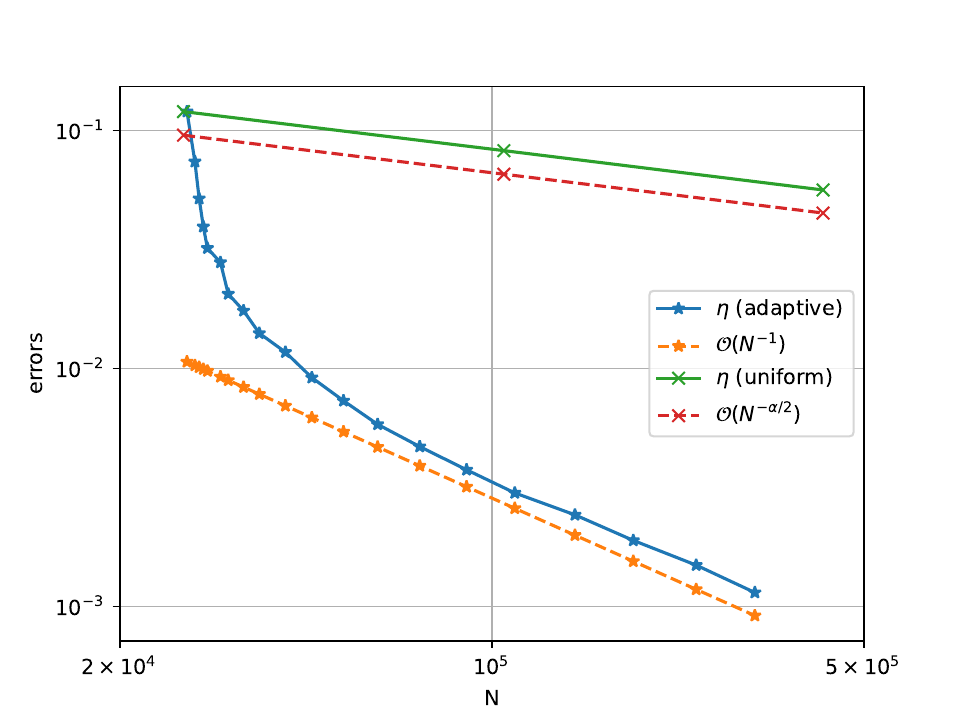}
\caption{L-shaped domain: convergence history of $\eta_h$ under adaptive and uniform refinement, $\nu=0.3$ (left) and $\nu=0.4999$ (right).}
\label{fig:Lshape-conv}
\end{figure}

\begin{figure}[h]
\centering
 \includegraphics[width=0.8\textwidth]{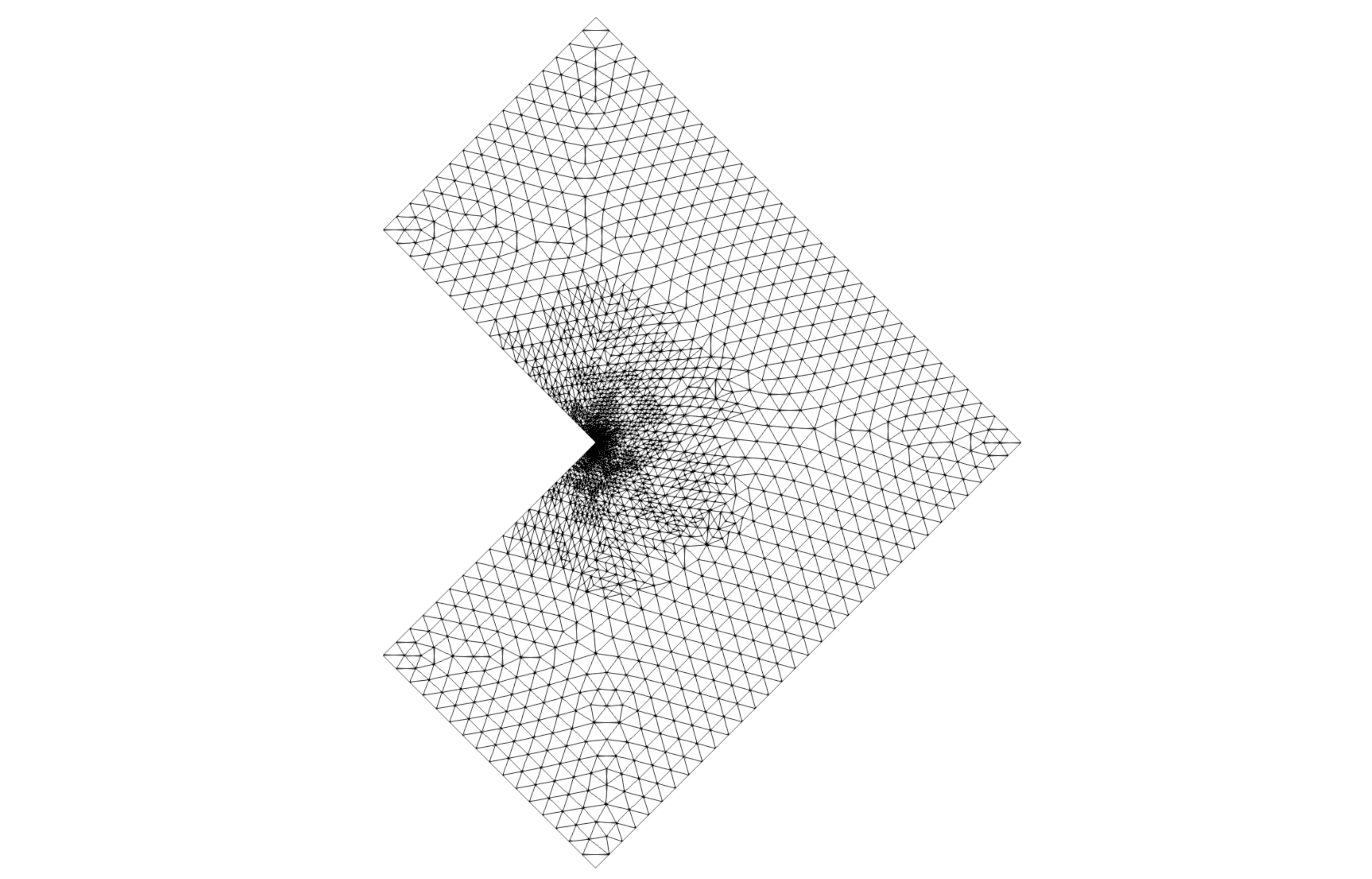}
\caption{L-shaped domain: adaptively refined mesh for $\nu=0.3$ and $N=90791$ after 15 refinement steps.}
\label{fig:Lshape-mesh}
\end{figure}

\subsection{Cook's membrane}
\label{sec:num-cook}

Our last example is Cook's membrane, a standard benchmark in computational mechanics for which no analytical solution is available. The domain is the trapezoid with vertices $(0,0)$, $(48,44)$, $(48,60)$, $(0,44)$, clamped on $\Gamma_D=\text{conv}\{(0,0),(0,44)\}$, subject to a shear traction $\bg=(0,1)$ on $\text{conv}\{(48,44),(48,60)\}$ and traction-free elsewhere, and $\bff=(0,0)$. We take $E=1$ and $\nu=0.4999$, so that the material is close to the incompressible limit. This is the most demanding regime for this problem, owing to the incompatible traction data at the corners $(48,44)$ and $(48,60)$: $\bg$ does not belong to the trace space of $S_h$ on any triangulation, so that the oscillation term $\text{osc}(\bg)$ does not vanish.

Figure~\ref{fig:cook-conv} shows the convergence history of $\eta_h$ under adaptive versus uniform refinement, with the reference solution taken on a uniformly refined mesh. Figure~\ref{fig:cook-mesh} shows the resulting adaptively refined mesh, with refinement concentrated near the corners where the traction data is incompatible.

\begin{figure}[h!]
\centering
\includegraphics[width=0.6\textwidth]{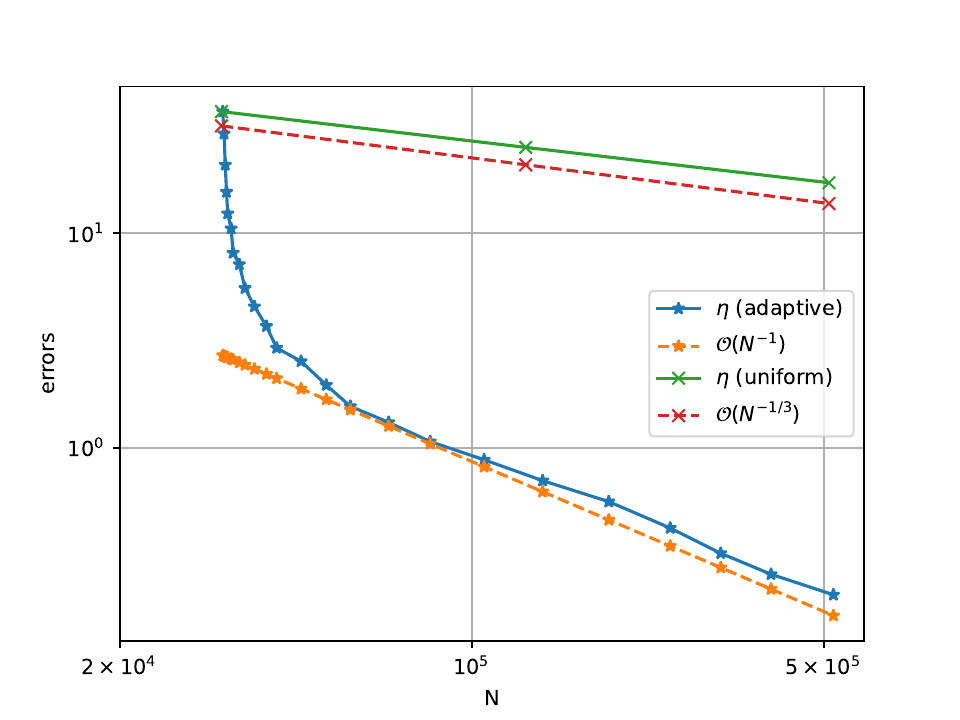}
\caption{Cook's membrane: convergence history of $\eta_h$ under adaptive and uniform refinement.}
\label{fig:cook-conv}
\end{figure}

\begin{figure}[h!]
\centering
\includegraphics[width=0.8\textwidth]{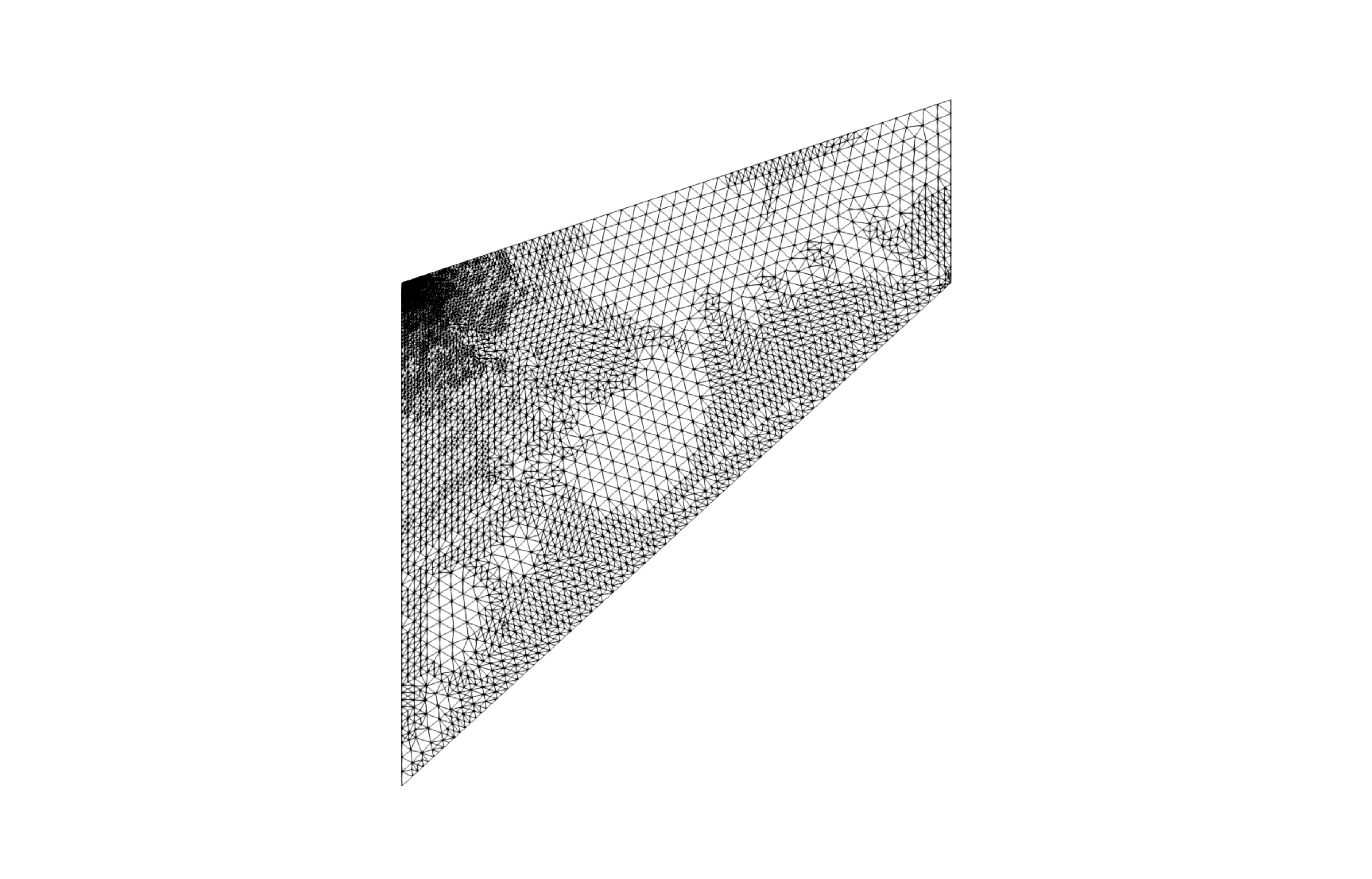}
\caption{Cook's membrane: adaptively refined mesh after 15 refinements.}
\label{fig:cook-mesh}
\end{figure}

\section{Conclusions}
\label{sec:conclusions}

We have developed an a posteriori error estimator for exactly symmetric mixed finite element discretizations of linear elasticity, based on recasting a Stenberg-type postprocessing scheme as a local residual minimization problem. The resulting estimator is reliable and locally efficient in the compressible regime, and admits an alternative reliability estimate, with a constant that remains bounded as $\lambda\to\infty$.

A central feature of our approach, illustrated numerically in Section~\ref{sec:numerical_examples}, is that a single indicator and a single comparison norm serve both regimes, in contrast with the hypercircle-based estimator of~\cite{LS2023}, which requires two distinct constructions: the Prager-Synge hypercircle principle for the compressible regime, and the continuous robust stability estimate of the primal mixed formulation for the incompressible limit, owing to the fact that $\|\cdot\|_\CC$ is no longer a norm as the material approaches incompressibility. This unification comes at the cost of an effectivity index that, unlike the exact hypercircle identity, does not attain the value one: our estimator is built from triangle and Cauchy-Schwarz inequalities rather than an exact energy identity. In exchange, it is entirely local, requiring no additional global solve, and is embedded within the same residual-minimization framework already applied to scalar diffusion problems~\cite{MugaRojasVega2023} and to the Helmholtz equation~\cite{CamargoRojasVega2025}, suggesting a natural path toward further extensions.

In a forthcoming companion paper, we address, following the same methodology, the case of weakly symmetric mixed finite elements. The present framework may also extend to other settings admitting a Stenberg-type postprocessing scheme, such as fourth-order elliptic problems, or the obstacle problem, where the underlying variational inequality is resolved at the discretization stage; adapting the residual minimization framework to the resulting discrete approximations, however, would require its own analysis, in particular regarding the postprocessing scheme and the saturation assumption underlying the reliability estimates.

%\section*{Acknowledgments}
\newpage
\bibliographystyle{siamplain}
\bibliography{references}
\end{document}

%% file: ex_shared.tex
\usepackage{lipsum}
\usepackage{amsfonts}
\usepackage{graphicx}
\usepackage{epstopdf}
\usepackage{algorithmic}
\ifpdf
  \DeclareGraphicsExtensions{.eps,.pdf,.png,.jpg}
\else
  \DeclareGraphicsExtensions{.eps}
\fi

\newsiamremark{remark}{Remark}
\newsiamremark{hypothesis}{Hypothesis}
\crefname{hypothesis}{Hypothesis}{Hypotheses}
\newsiamthm{claim}{Claim}
\newsiamremark{fact}{Fact}
\crefname{fact}{Fact}{Facts}

\headers{Minimum-residual error indicator for elasticity}{E. C\'aceres and P. Vega}

\title{An adaptive superconvergent mixed finite element method for exactly symmetric linear elasticity based on local residual minimization\thanks{Submitted to the editors DATE.
\funding{The first author acknowledges the support provided by the Dirección de Investigación y Desarrollo (DICYT) of Universidad de Santiago de Chile under project DICYT Postdoctorado 042632VR\_Postdoc, and previously by the Agencia Nacional de Investigación y Desarrollo (ANID) through project FONDECYT 11251691. The second author gratefully acknowledges the support from ANID through project FONDECYT 11251691.}}}

\author{Ernesto Cáceres\thanks{Department of Mathematics and Computer Science; Computational Heat and Fluid Flow\\ Laboratory, Universidad de Santiago de Chile, Santiago, Chile 
  (\email{ernesto.caceres@usach.cl}).}
\and Patrick Vega\thanks{Department of Mathematics and Computer Science; Computational Heat and Fluid Flow\\ Laboratory, Universidad de Santiago de Chile, Santiago, Chile 
  (\email{patrick.vega@usach.cl}).}}

\usepackage{amsopn}

%% file: general_commands.tex
\usepackage{amssymb}
\usepackage{mathrsfs}

\newcommand{\eq}{:=}
\newcommand{\osw}{I_{\mathrm{av}}^h}

\newcommand{\bsigma}{\boldsymbol{\sigma}}
\newcommand{\btau}{\boldsymbol{\tau}}
\newcommand{\bff}{\boldsymbol{f}}
\newcommand{\bzero}{\boldsymbol{0}}
\newcommand{\bpsi}{\boldsymbol{\psi}}
\newcommand{\bvarphi}{\boldsymbol{\varphi}}

\DeclareMathOperator*{\argmin}{arg\,min}

\newcommand{\ddiv}{\operatorname{div}}

\newcommand{\jmp}[1]{[\![#1]\!]}

\newcommand{\bg}{\boldsymbol g}

\newcommand{\bn}{\boldsymbol n}

\newcommand{\br}{\boldsymbol r}

\newcommand{\bu}{\boldsymbol u}
\newcommand{\bv}{\boldsymbol v}
\newcommand{\bw}{\boldsymbol w}

\newcommand{\bz}{\boldsymbol z}

\newcommand{\CA}{\mathcal A}
\newcommand{\CB}{\mathcal B}
\newcommand{\CC}{\mathcal C}

\newcommand{\CE}{\mathcal E}

\newcommand{\CM}{\mathcal M}

\newcommand{\CO}{\mathcal O}

\newcommand{\CT}{\mathcal T}

\newcommand{\T}{\mathsf{T}}